\documentclass[letterpaper,12pt]{amsart}

\usepackage{amssymb}
\usepackage{amsmath}
\usepackage{algpseudocode, algorithm}

\numberwithin{equation}{section}
\numberwithin{algorithm}{section}

\newtheorem{lemma}{Lemma}[section]
\newtheorem{question}[lemma]{Question}
\newtheorem{theorem}[lemma]{Theorem}
\newtheorem{proposition}[lemma]{Proposition}
\newtheorem{definition}[lemma]{Definition}
\newtheorem{corollary}[lemma]{Corollary}

\newtheorem{conjecture}[lemma]{Conjecture}

\theoremstyle{definition}
\newtheorem{remark}[lemma]{Remark}
\newtheorem{example}[lemma]{Example}

\newcommand{\Z}{\mathbb{Z}}

\newcommand{\id}{\mathrm{id}}
\newcommand{\Sym}{\mathrm{Sym}}

\newcommand{\LRA}{\Leftrightarrow}

\newcommand{\inv}{\mathrm{inv}}

\newcommand{\Soc}{\mathrm{Soc}}

\newcommand{\Autgp}{\mathrm{Aut}_{\mathrm{gp}}}
\newcommand{\Autmag}{\mathrm{Aut}_\mathrm{mag}}
\newcommand{\Symo}{\mathrm{Sym}_0}
\newcommand{\ttau}{\tilde{\tau}}
\newcommand{\tsigma}{\tilde{\sigma}}
\newcommand{\tzeta}{\tilde{\zeta}}
\newcommand{\tpi}{\tilde{\pi}}
\newcommand{\bs}{\backslash}
\newcommand{\Fix}{\mathrm{Fix}}
\newcommand{\Supp}{\mathrm{Supp}}

\begin{document}
	\title{Magma Automorphisms and Quasi-Linear Cycle Sets}
	
	\author{Nigel P.~Byott}
	\address{Department of Mathematics \& Statistics, University of Exeter, Exeter 
		EX4 4QF, U.K.}  
	\email{N.P.Byott@exeter.ac.uk}
	
	\author{Edgar Jasko}
	\address{Department of Mathematics, KU Leuven, B-3001 Leuven, Belgium}  
\email{edgargladstone.jasko@student.kuleuven.be}	
	
	\thanks{This work was supported by the Engineering and Physical Sciences
		Research Council [grant number EP/V005995/1] and by the London Mathematical
		Society [Undergraduate Research Bursary URB-2024-37]  \newline 
		\indent
		For the purpose of open access, the authors have applied a CC BY public
		copyright licence  to any Author Accepted Manuscript version arising. \newline 
		\indent
		Data Access Statement: No data was used for the research described in this
		article.}

	\date{\today}
	\subjclass[2020]{16T25, 20N02, 81R50}
	\keywords{Yang-Baxter equation, set-theoretic
		solutions, cycle set}
	
	\bibliographystyle{amsalpha}
	
\begin{abstract}
		
Rump asked in 2016 whether every finite quasi-linear cycle set is
retractible. We reinterpret this question in terms of automorphisms of certain
pointed magmas, and propose a conjecture on the automorphisms of these magmas
which would imply an affirmative answer to Rump's question. We provide some theoretical and
computational evidence in support of our conjecture.
\end{abstract}
	
\maketitle

\section{Introduction}

The quantum Yang-Baxter equation first arose independently in the work of Yang \cite{Yang} on quantum systems and of Baxter \cite{Baxter} on statistical mechanics. It has since been found to play a fundamental role in mathematics and physics. In particular, it underlies the theory of quantum groups (see for example \cite{Majid}). A solution of the quantum Yang-Baxter equation on a vector space $V$ is a linear map $R: V \otimes V \to V \otimes V$ satisfying 
\begin{equation} \label{QYBE}
	 R_{12} R_{13} R_{23} = R_{23} R_{13} R_{12} : V \otimes V \otimes V \to V \otimes V \otimes V, 
\end{equation}
where $R_{ij}$ acts as $R$ in positions $i$ and $j$, and as the identity in the remaining position. Via composition with the twist map $x \otimes y \mapsto y \otimes x$, the quantum Yang-Baxter Equation is equivalent to the equation
\begin{equation} \label{YBE}
	   	R_{12} R_{23} R_{12} = R_{23} R_{12} R_{23},
\end{equation}
which is known simply as the Yang-Baxter Equation.

Drinfel'd \cite{Drinfeld} made the fruitful suggestion that combinatorial aspects of the quantum Yang-Baxter equation might be investigated by studying, for a nonempty set $X$, those functions $r : X \times X \to X \times X$ which satisfy the analogous condition to (\ref{QYBE}):
\begin{equation} \label{stQYBE}
	r_{12} r_{13} r_{23} = r_{23} r_{13} r_{12}.
\end{equation}
Any such set-theoretic solution of the quantum Yang-Baxter equation then gives rise to a solution of (\ref{QYBE}) on the vector space $V$ with $X$ as a basis. Similarly, if $r$ satisfies
\begin{equation} \label{stYBE}
	r_{12} r_{23} r_{12} = r_{23} r_{12} r_{23}
\end{equation}
then $r$ is a set-theoretic solution of the Yang-Baxter equation. 

We write a solution $r$ of (\ref{stQYBE}) in the form
$r(x,y) = (x^y, {}^x y)$. 
Then $r$ is said to be {\em left-nondegenerate} if the map $x \mapsto x^y$ is bijective for each $y$, and {\em nondegenerate} if also $y \mapsto {}^x y$ is bijective for each $x$. It is said to be {\em unitary} if $r_{21} r = \id_{X \times X}$. The corresponding solution of (\ref{stYBE}) is then {\em involutive}, i.e.~~$r^2=\id_{X \times X}$.

Set-theoretic solutions of the (quantum) Yang-Baxter equation have been intensively studied, beginning with groundbreaking work of Etingof, Schedler and Soloviev \cite{ESS} and Lu, Yan and Zhu \cite{LYZ}, and have been found to have connections with many other algebraic concepts, including quandles \cite{Fenn}, Garside groups \cite{Chouraqui} and skew lattices \cite{CV-V}. They have also led to the introduction of several new algebraic objects, such as braces \cite{Rump07}, skew braces \cite{GV} and cycle sets \cite{Rump05}. Braces give solutions of (\ref{stQYBE}) which are both nondegenerate and unitary. Conversely, given a nondegenerate unitary solution of (\ref{stQYBE}), one can form its structure group, which is canonically a brace. Skew braces generalise this picture to solutions which are nondegenerate but not necessarily unitary. 

In this paper, we will be concerned with cycle sets. 
There is a bijective correspondence between cycle sets and left-nondegenerate unitary solutions of (\ref{stQYBE}) \cite[Proposition 1]{Rump05}, or, equivalently, between cycle sets and solutions of (\ref{stYBE}) which are left-nondegenerate and involutive.
A cycle set consists of a set $X$ and a binary operation $\cdot$ on $X$ such that the function $y \mapsto x \cdot y$ is a permutation of $X$ for each $x \in X$ and the identity $( x \cdot y) \cdot (x \cdot z)  = (y \cdot x) \cdot (y \cdot z)$ holds for all $x$, $y$, $z \in X$.

In a subsequent paper \cite{Rump16}, Rump showed how examples of cycle sets arise from certain $\tau$-groups. For an abelian group $A$, we define
\begin{equation} \label{symo}
	 \Symo(A)=\{ \tau: \tau \mbox{ is a permutation of $A$ with } \tau(0)=0 \}. 
\end{equation} 
A $\tau$-group is an abelian group $A$ together with an element $\tau$ of $\Symo(A)$. We then introduce a binary operation $\cdot_\tau$ on $A$ by 
\begin{equation} \label{def-dot}
	a \cdot_\tau b = \tau(b-a) - \tau(-a). 
\end{equation} 	
The $\tau$-group $A$ is a cycle set if and only if 
\begin{equation}  \label{tau-autom}
	\tau(a \cdot b) = \tau(a) \cdot \tau(b) \mbox{ for all } a, b \in A. 
\end{equation}
A quasi-linear cycle set \cite[Definition 3]{Rump16} is a cycle set arising in this way from a $\tau$-group. Its socle is defined to be the subset
\begin{eqnarray*} 
	\Soc(A,\tau) & = & \{ a \in A : a \cdot b = 0 \cdot b \mbox{ for all } b \in A \} \\
	& = & \{ a \in A : \tau(b-a)=\tau(b) + \tau(-a) \mbox{ for all } b \in A \}.
\end{eqnarray*}
Then $\Soc(A,\tau)$ is a subgroup of $A$ \cite[Proposition 5]{Rump16}. Thus we may rewrite its definition as 
\begin{equation} \label{def-soc} 
  \Soc(A,\tau) =  \{ a \in A : \tau(b+a)=\tau(b) + \tau(a) \mbox{ for all } b \in A \}.
\end{equation} 
In particular, $\Soc(A,\tau)=A$ precisely when $\tau$ is a group automorphism of $A$. 
Moreover, by \cite[Proposition 12]{Rump16}, the quotient group $A/\Soc(A,\tau)$ becomes a quasi-linear cycle set. We call this the {\em retraction} of $A$, and we say that $A$ is {\em retractible} if $A$ can be reduced to a set of cardinality $1$ by iterating this operation finitely many times.

Rump \cite[Problem 2]{Rump16} posed the following question:

\begin{question} \label{Rump-qn}
	Does every finite quasi-linear cycle set of size $>1$ have non-trivial socle?
\end{question}

If Question \ref{Rump-qn} has an affirmative answer then every finite quasi-linear cycle set is
retractible. In particular, this would mean that if $(A,\tau) $ is any
quasi-linear cycle set whose cardinality is prime, then $\Soc(A,\tau)=A$ and $\tau$ must be a group automorphism of $A$. This prediction has been verified computationally by Rump for primes $p<17$. In unpublished work, I.~Colazzo and L.~Vendramin verified computationally that all quasi-linear cycle sets of size $<23$ have non-trivial socle. 

In this paper, we reinterpret Question \ref{Rump-qn} by viewing the $\tau$-group given by the abelian group $A$ and the permutation $\tau \in \Symo(A)$ as an object in the category of pointed magmas, that is, of sets with a binary operation and a distinguished element. The binary operation is that given by (\ref{def-dot}), and the distinguished element is the identity element $0$ of $A$. We will write $(A,\tau)$ for the pointed magma given by $A$ and $\tau$. 

It follows from (\ref{tau-autom}) that the pointed magma $(A,\tau)$ is a quasi-linear cycle set if and only if the permutation $\tau$ is itself a magma automorphism of $(A,\tau)$. This observation, together with an investigation of a number of examples, suggested the following conjecture:

\begin{conjecture}  \label{main-conj}
Let $A$ be a finite abelian group, and let $\tau$, $\sigma \in \Symo(A)$. If $\sigma$ is a magma automorphism of $(A,\tau)$ then either $\Soc(A,\tau)\neq \{0\}$ or $\Soc(A,\sigma)\neq \{0\}$.
\end{conjecture}

In the special case $\sigma=\tau$, Conjecture \ref{main-conj} predicts that if (\ref{tau-autom}) holds, so that $(A,\tau)$ is a quasi-linear cycle set, then $(A,\tau)$ has non-trivial socle. Thus Conjecture \ref{main-conj} implies an affirmative answer to Question \ref{Rump-qn}. We also note that the conclusion of Conjecture \ref{main-conj} is symmetric in $\sigma$ and $\tau$. In fact, a reciprocity holds between $\sigma$ and $\tau$ so that $\sigma$ is a magma automorphism of $(A,\tau)$ if and only if $\tau$ is a magma automorphism of $(A,\sigma)$ (see Proposition \ref{recip} below).

Our goal in this paper is to investigate Conjecture \ref{main-conj}. We will give a number of theoretical results consistent with the conjecture, and verify the conjecture computationally for all abelian groups of order $\leq 14$ and also for the cyclic groups of order $17$ and $19$. We find it convenient to adopt the following terminology.

\begin{definition}  \label{def-c-ex}
Let $A$ be an abelian group and let $\tau \in \Symo(A)$. Then $\tau$ is a {\em counterexample for} $A$ if $\Soc(A,\tau)=\{0\}$ and there exists some $\sigma \in \Symo(A)$ such that  $\Soc(A,\sigma)=\{0\}$ and $\sigma$ is a magma automorphism of $(A,\tau)$.
\end{definition}

Thus Conjecture \ref{main-conj} holds for the abelian group $A$ if and only if there is no counterexample for $A$.

The second-named author's contribution to this project was funded by a London Mathematical Society Undergraduate Research Bursary (URB-2024-37) held for a 6-week period during the summer of 2024, and comprised both computational and theoretical work. This built on prior theoretical work of the first-named author in the case when $A$ has prime order. After the Bursary was completed, further work was undertaken by the first-named author to extend the results and computations beyond the prime order case. 

The nature of our computations is more combinatorial than group-theoretic. We therefore implemented them in Python, rather than using a group-theoretic package such as MAGMA or GAP. While it is probably possible to extend the computations using more sophisticated methods (e.g.~parallel processing, constrained optimisation), this is beyond the scope of the current project.

The paper is organised as follows. In \S2, we define the category of pointed magmas and give some general results on pointed magmas given by $\tau$-groups. In \S3, we consider the case where $\tau=(a,b)$ is a transposition, showing that if $A$ is cyclic and $\gcd(b-a,b)=1$ then Conjecture \ref{main-conj} holds for $(A,\tau)$. In \S4, we consider submagmas in our magmas $(A, \tau)$. 
In \S5, we give some results on the cycle structures of $\tau$ and $\sigma$ in various cases. Finally, \S6 describes our computational methods and results. Throughout the paper, we include a number of explicit examples.

\section{Pointed Magmas and $\tau$-groups}
A magma is a set together with a binary operation. We augment this notion by
including a distinguished element.

\subsection{The category of pointed magmas}
\begin{definition} \label{mag-cat}
\begin{itemize}
\item[(i)] 
A {\em pointed magma} $(X, \cdot,0)$ consists of a set $X$, a binary operation
$\cdot$ on $X$, and a distinguished element $0 \in X$.
\item[(ii)] Let $(X, \cdot_X,0_X)$ and $(Y, \cdot_, 0_Y)$ be pointed magmas. A {\em homomorphism of pointed magmas} from $X$ to $Y$ is a function $f:X \to Y$ such that $f(u \cdot_X v) = f(u) \cdot_Y f(v)$ for all $u$, $y \in X$, and $f(0_X)=0_Y$.
\item[(iii)] A {\em pointed submagma} of a pointed magma $(X,\cdot,0)$ is a subset $Y \subseteq X$ such that $0 \in Y$ and $u \cdot v \in Y$ for all $u$, $v \in Y$. 
\end{itemize}
\end{definition}
	
For the rest of this paper, all magmas considered will be pointed magmas. For brevity, we write
{\em magma homomorphism} (respectively, {\em submagma}) to mean {\em homomorphism of pointed magmas} (respectively, {\em pointed submagma}) in the sense of Definition \ref{mag-cat}. 

As usual, a bijective magma homomorphism will be called a {\em magma isomorphism}, and
a magma isomorphism $X \to X$ will be called a {\em magma automorphism} of $X$. Thus
we have the group $\Autmag(X)$ of magma automorphisms of the pointed magma $X$. A submagma $Y$ of $X$ is {\em proper} if $Y \neq X$, and {\em nontrivial} if $Y \neq \{0\}$.
		
\subsection{$\tau$-groups as pointed magmas}
Let $A$ be an abelian group. For $\tau \in \Symo(A)$, we have a binary operation $\cdot_\tau$ on $A$, as in (\ref{def-dot}):
$$ a \cdot_\tau b = \tau(b-a) - \tau(-a).  $$ 
This makes $A$ into a $\tau$-group, and $(A, \cdot_\tau,0)$ is a pointed magma. We denote this magma simply by $(A, \tau)$.

For each $a \in A$, the function $b \mapsto a \cdot b$ is a permutation of $A$. We denote this permutation by $\pi_a$. 

We write $\Soc(A,\tau)$ for the socle of the underlying $\tau$-group:
$$	\Soc(A,\tau) =  \{ a \in A : a \cdot_\tau b = 0 \cdot_\tau b \mbox{ for all } b \in A \}. $$
As noted in the introduction, $\Soc(A,\tau)$ is a subgroup of $A$, and $\Soc(A,\tau)=A$ precisely when $\tau$ is a group automorphism of $A$. 

We seek to investigate the group $\Autmag(A,\tau)$ of magma automorphisms of $(A,\tau)$. In particular, Conjecture \ref{main-conj} asserts that if $\sigma \in \Autmag(A,\tau)$ then either $\Soc(A,\tau) \neq \{0\}$ or $\Soc(A,\sigma) \neq \{0\}$. To avoid confusion, we write $\Autgp(A)$ for the group of  automorphisms of the abelian group $A$. 

\subsection{Examples}

We illustrate these ideas with some examples. Throughout the paper, we write $C_n$ for the cyclic group of order $n$ and identify this with $\Z/n\Z$. 

\begin{example} \label{C6-ex}
Let $A=C_6$ and $\tau=(135)(24)$. Table \ref{tab1} shows the operation $\cdot_\tau$ and the permutations $\pi_a$. 

\begin{table}[ht]
	\centerline{
		\begin{tabular}{|c|cccccc|c|} \hline
		$a$	& $0$ & $1$ & $2$ & $3$ & $4$ & $5$ &  $\pi_a$ \\ \hline
			$0$ & $0$ & $3$ & $4$ & $5$ & $2$ & $1$ & $(135)(24)$  \\ 
			$1$ & $0$ & $5$ & $2$ & $3$ & $4$ & $1$ & $(15)$ \\
			$2$ & $0$ & $5$ & $4$ & $1$ & $2$ & $3$ & $(153)(24)$ \\
			$3$ & $0$ & $3$ & $2$ & $1$ & $4$ & $5$ & $(13)$ \\
			$4$ & $0$ & $1$ & $4$ & $3$ & $2$ & $5$ & $(24)$ \\
			$5$ & $0$ & $1$ & $2$ & $5$ & $4$ & $3$ & $(35)$ \\ \hline
		\end{tabular}
		}  
	\vskip4mm
		\caption{Magma operation and permutations $\pi_a$ for $(C_6,\tau)$ with $\tau=(135)(24)$ (Example \ref{C6-ex}).  } 	\label{tab1}
\end{table}

As no other row agrees with the row for $a=0$, we see immediately that  $\Soc(A,\tau)=\{0\}$. Thus Conjecture \ref{main-conj} predicts that for any $\sigma \in \Autmag(A,\tau)$ we should have $\Soc(A,\sigma) \neq \{0\}$. 

We can determine $\Autmag(A,\tau)$ as follows. If $\sigma \in \Autmag(A,\tau)$ then certainly $\sigma(0)=0$. Since $\pi_a$ has order $6$ only for $a=0$ and $a=2$, we must have $\sigma(2)=2$. As $4$ is the only element apart from $2$ occurring in a $2$-cycle in $\pi_2$, we must also have $\sigma(4)=4$. Hence $\sigma(1) \in \{1,3,5\}$. Suppose that $\sigma(1)=3$. Then 
$$ \sigma(5)=\sigma(1 \cdot_\tau 1)=\sigma(1) \cdot_\tau \sigma(1) = 3 \cdot_\tau 3 = 1 $$
and
$$ \sigma(3)=\sigma(5 \cdot_\tau 5)=\sigma(5) \cdot_\tau \sigma(5) = 1 \cdot_\tau 1 = 5. $$  
Hence $\sigma=(135)$. One can verify that $\sigma \in \Autmag(A,\tau)$. We will see in Example \ref{C6-ex2} that $\Soc(A,\sigma) \neq \{0\}$ as expected. Since $\Autmag(C_6,\tau)$ is a group and it contains a unique permutation taking $1$ to $3$, we conclude that
$$ \Autmag(C_6,\tau) = \{ \id, (135), (153) \}. $$
\end{example}

\begin{example} \label{C6-ex2}
Again let $A=C_6$, and now consider $(C_6,\sigma)$ with $\sigma=(135)$ as in Example \ref{C6-ex}. The operation for this is shown in Table \ref{tab2}.

\begin{table}[ht]
	\centerline{
		\begin{tabular}{|c|cccccc|c|} \hline
		$a$	& $0$ & $1$ & $2$ & $3$ & $4$ & $5$ & $\pi_a$  \\ \hline
			$0$ & $0$ & $3$ & $2$ & $5$ & $4$ & $1$ & $(135)$  \\ 
			$1$ & $0$ & $5$ & $2$ & $1$ & $4$ & $3$ & $(153)$ \\
			$2$ & $0$ & $3$ & $2$ & $5$ & $4$ & $1$ & $(135)$  \\ 
			$3$ & $0$ & $5$ & $2$ & $1$ & $4$ & $3$ & $(153)$ \\
			$4$ & $0$ & $3$ & $2$ & $5$ & $4$ & $1$ & $(135)$  \\ 
			$5$ & $0$ & $5$ & $2$ & $1$ & $4$ & $3$ & $(153)$ \\ \hline
		\end{tabular}
	}  
	\vskip4mm
\caption{Magma operation and permutations $\pi_a$ for $(C_6,\sigma)$ with $\sigma=(135)$ (Example \ref{C6-ex2}).} \label{tab2}
\end{table}

We see immediately from Table \ref{tab2} that $\Soc(A,\sigma)$ is the subgroup $\{0,2,4\}$ of $A$. From Example \ref{C6-ex} and the reciprocity property mentioned in the introduction, we predict that $\tau=(135)(24) \in \Autmag(A,\sigma)$. By similar reasoning to Example \ref{C6-ex}, we find that $\Autmag(A,\sigma)$ is in fact the cyclic group of order $6$ generated by $\tau$. 
\end{example}

\begin{example} \label{no-automs}
Now let $A=C_7$ and $\tau=(25463)$. The operation is shown in Table \ref{tab3}.

\begin{table}[ht]
	\centerline{
		\begin{tabular}{|c|ccccccc|c|} \hline
		$a$	& $0$ & $1$ & $2$ & $3$ & $4$ & $5$ & $6$ & $\pi_a$  \\ \hline
			$0$ & $0$ & $1$ & $5$ & $2$ & $6$ & $4$ & $3$ & $(25463)$  \\ 
			$1$ & $0$ & $4$ & $5$ & $2$ & $6$ & $3$ & $1$ & $(146)(253)$ \\
			$2$ & $0$ & $6$ & $3$ & $4$ & $1$ & $5$ & $2$ & $(135)$  \\ 
			$3$ & $0$ & $5$ & $4$ & $1$ & $2$ & $6$ & $3$ & $(1563)(24)$ \\
			$4$ & $0$ & $4$ & $2$ & $1$ & $5$ & $6$ & $3$ & $(14563)$  \\ 
			$5$ & $0$ & $4$ & $1$ & $6$ & $5$ & $2$ & $3$ &  $(1452)(36)$ \\ 
			$6$ & $0$ & $4$ & $1$ & $5$ & $3$ & $2$ & $6$ &  $(14352)$ \\ 	\hline
		\end{tabular}
	}  
		\vskip4mm
	\caption{Magma operation and permutations $\pi_a$ for $(C_7,\tau)$ with $\tau=(25463)$ (Example \ref{no-automs}).} \label{tab3}
\end{table}

Let $\sigma \in \Autmag(A,\tau)$. Then $\sigma(1)=1$ since the only permutation $\pi_a$ consisting of two $3$-cycles in $\pi_1$. But then
\begin{eqnarray*}
	\sigma(4) & = & \sigma(1).\sigma(1)=1 \cdot 1 = 4, \\
	\sigma(6) & = & \sigma(1).\sigma(4)=1 \cdot 4 = 6, \\
	\sigma(5) & = & \sigma(4).\sigma(4)=4 \cdot 4 = 5, \\	
	\sigma(3) & = & \sigma(1).\sigma(5)=1 \cdot 5 = 3, \\	
	\sigma(2) & = & \sigma(1).\sigma(3)=1 \cdot 3 = 2. 
\end{eqnarray*}
Hence $\sigma = \id$, and in this case $\Autmag(A,\tau)=\{\id\}$.
\end{example}

\subsection{Some properties of $\Autmag(A,\tau)$}

For $\tau \in \Symo(A)$, we define $\ttau \in \Symo(A)$ by
\begin{equation} \label{ttau}
	\ttau(a) = -\tau(-a). 
\end{equation}

\begin{proposition} \label{tilde-comm}
If $\sigma \in \Autmag(A,\tau)$ then $\sigma$ commutes with $\tau$ and $\ttau$.
\end{proposition}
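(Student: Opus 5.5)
The plan is to specialise the automorphism condition $\sigma(a\cdot_\tau b)=\sigma(a)\cdot_\tau\sigma(b)$ to choices of $a,b$ that isolate $\tau$ or $\ttau$. The starting point is the definition $a\cdot_\tau b=\tau(b-a)-\tau(-a)$ together with $\sigma(0)=0$.

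\emph{Commuting with $\tau$.} Take $a=0$. Then $0\cdot_\tau b=\tau(b)-\tau(0)=\tau(b)$, so $\pi_0=\tau$. Applying $\sigma$ and using $\sigma(0)=0$ gives
$$\sigma(\tau(b))=\sigma(0\cdot_\tau b)=\sigma(0)\cdot_\tau\sigma(b)=0\cdot_\tau\sigma(b)=\tau(\sigma(b))$$
for every $b\in A$. Hence $\sigma\tau=\tau\sigma$.

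\emph{Commuting with $\ttau$.} Take $b=0$. Then $a\cdot_\tau 0=\tau(-a)-\tau(-a)=0$, which carries no information, so this choice is useless. Instead take $b=a$:
$$a\cdot_\tau a=\tau(0)-\tau(-a)=-\tau(-a)=\ttau(a).$$
So the squaring map $a\mapsto a\cdot_\tau a$ is exactly $\ttau$. Since $\sigma$ is a magma homomorphism,
$$\sigma(\ttau(a))=\sigma(a\cdot_\tau a)=\sigma(a)\cdot_\tau\sigma(a)=\ttau(\sigma(a)),$$
so $\sigma\ttau=\ttau\sigma$.

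There is no real obstacle here. The only thing needed is to recognise that $\pi_0=\tau$ and that the diagonal map $a\mapsto a\cdot_\tau a$ equals $\ttau$. Both follow immediately from the definition and $\tau(0)=0$, and each commutation relation is then a one-line consequence of the homomorphism property together with $\sigma(0)=0$.
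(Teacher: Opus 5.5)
Your proposal is correct and follows the paper's own argument: setting $a=0$ gives $\sigma\tau=\tau\sigma$, and setting $b=a$ uses $a\cdot_\tau a=-\tau(-a)=\ttau(a)$ to give $\sigma\ttau=\ttau\sigma$, with $\sigma(0)=0$ and $\tau(0)=0$ used exactly as you do. Your side remark that $b=0$ yields nothing is accurate but not needed for the proof.
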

\begin{proof}
For all $a$, $b \in A$, we have 
$\sigma(a \cdot_\tau b) = \sigma(a) \cdot_\tau \sigma(b)$, so that 
\begin{equation} \label{mag-aut-eqn}
	 \sigma( \tau(b-a) - \tau(-a)) = \tau(\sigma(b)-\sigma(a)) - \tau( - \sigma(a)). 
\end{equation} 
Setting $a=0$ in (\ref{mag-aut-eqn}), we obtain $\sigma(\tau(b))=\tau(\sigma(b))$
for all $b$. Setting $b=a$  in (\ref{mag-aut-eqn}), we obtain $\sigma(0-\tau(-a))=0-\tau(-\sigma(a))$. Thus $\sigma(\ttau(a))=\ttau(\sigma(a))$ for all $a$.
\end{proof}

In the special case that $\tau$ is a group automorphism, we can determine $\Autmag(A,\tau)$ precisely.

\begin{proposition} \label{autom}
If $\tau \in \Autgp(A)$ then 
$$ \Autmag(A,\tau) = \{ \sigma \in \Symo(A) : \sigma \tau = \tau \sigma \}. $$
\end{proposition}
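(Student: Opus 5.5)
We prove the two inclusions separately. The inclusion $\subseteq$ is immediate from Proposition \ref{tilde-comm}: any $\sigma \in \Autmag(A,\tau)$ fixes $0$ and commutes with $\tau$. So the content lies in the reverse inclusion.

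For $\supseteq$, we first simplify the magma operation using the additivity of $\tau$. If $\tau \in \Autgp(A)$, then
$$ a \cdot_\tau b = \tau(b-a) - \tau(-a) = \tau(b) - \tau(a) + \tau(a) = \tau(b) $$
for all $a$, $b \in A$. Hence in $(A,\tau)$ the product $a \cdot_\tau b$ does not depend on $a$, and every $\pi_a$ equals $\tau$. This is consistent with $\Soc(A,\tau)=A$.

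Now let $\sigma \in \Symo(A)$ with $\sigma\tau=\tau\sigma$. For all $a$, $b \in A$ we have
$$ \sigma(a \cdot_\tau b) = \sigma(\tau(b)) = \tau(\sigma(b)) = \sigma(a) \cdot_\tau \sigma(b). $$
Since $\sigma$ is also bijective and satisfies $\sigma(0)=0$, it is a magma automorphism of $(A,\tau)$.

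The main (and only) point is the observation that the operation collapses to $a\cdot_\tau b=\tau(b)$. Once this is seen, the argument is routine, so no real obstacle is expected.
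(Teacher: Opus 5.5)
Your proposal is correct and follows essentially the same route as the paper: $\subseteq$ comes from Proposition \ref{tilde-comm}, and for $\supseteq$ you use additivity of $\tau$ to get $a \cdot_\tau b = \tau(b)$, then apply $\sigma\tau=\tau\sigma$. The paper carries out exactly this computation.
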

\begin{proof}
By Proposition \ref{tilde-comm} it suffices to prove that if  $\tau \in \Autgp(A)$ and $\sigma \tau = \tau \sigma$ then $\sigma \in \Autmag(A,\tau)$. Now 
\begin{eqnarray*}
 \sigma(a \cdot_\tau b) & = & \sigma(\tau(b-a)-\tau(-a)) \\
                    &  = & \sigma(\tau(b)-\tau(a)+\tau(a)) \\
                    & = &  \sigma(\tau(b)) \\
                    & = &	\tau(\sigma(b)), 
\end{eqnarray*}  
while
$$ \sigma(a) \cdot_\tau \sigma(b) = \tau(\sigma(b)-\sigma(a)) -
\tau(-\sigma(a)) = \tau(\sigma(b)). $$
Hence $\sigma \in \Autmag(A,\tau)$. 
\end{proof}
	
We next prove the reciprocity property mentioned in the introduction.
	
\begin{proposition} \label{recip}
For arbitrary $\tau$, $\sigma \in \Symo(A)$, 
$$ \sigma \in \Autmag(A,\tau) \LRA \tau \in \Autmag(A,\sigma). $$
\end{proposition}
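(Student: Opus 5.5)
The plan is to rewrite the defining identity of $\sigma \in \Autmag(A,\tau)$ through a change of variables and the commutation relations of Proposition \ref{tilde-comm}. After this rewriting, the identity should read literally as the statement that $\tau$ respects $\cdot_\sigma$. The converse then follows by exchanging the roles of $\sigma$ and $\tau$, since the argument is symmetric. Note that $\tau$ is already a bijection of $A$ fixing $0$, so only the multiplicativity condition $\tau(u \cdot_\sigma v) = \tau(u) \cdot_\sigma \tau(v)$ needs to be checked.

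First, assume $\sigma \in \Autmag(A,\tau)$, so that (\ref{mag-aut-eqn}) holds for all $a,b \in A$. Substitute $x = b-a$ and $y = -a$, so that $a = -y$ and $b = x - y$; as $a,b$ range over $A$, so do $x,y$. Then (\ref{mag-aut-eqn}) becomes
$$ \sigma(\tau(x) - \tau(y)) = \tau\bigl(\sigma(x-y) - \sigma(-y)\bigr) - \tau(-\sigma(-y)). $$

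The key step is to simplify the final term using $\ttau$. By definition, $-\tau(-c) = \ttau(c)$. Take $c = \sigma(-y)$ and use that $\sigma$ commutes with $\ttau$ (Proposition \ref{tilde-comm}). This gives
$$ -\tau(-\sigma(-y)) = \ttau(\sigma(-y)) = \sigma(\ttau(-y)) = \sigma(-\tau(y)). $$
Substituting and rearranging, we obtain
$$ \tau\bigl(\sigma(x-y) - \sigma(-y)\bigr) = \sigma(\tau(x) - \tau(y)) - \sigma(-\tau(y)). $$
By (\ref{def-dot}) applied to $\sigma$, the left-hand side is $\tau(y \cdot_\sigma x)$ and the right-hand side is $\tau(y) \cdot_\sigma \tau(x)$. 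Hence $\tau$ is a magma homomorphism of $(A,\sigma)$, and therefore $\tau \in \Autmag(A,\sigma)$.

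For the converse, apply the same argument with the roles of $\sigma$ and $\tau$ exchanged, invoking Proposition \ref{tilde-comm} for $\tau \in \Autmag(A,\sigma)$. The only non-mechanical point is recognising that the stray term $\tau(-\sigma(-a))$ must be handled through the commutation of $\sigma$ with $\ttau$, rather than with $\tau$ alone; commutation with $\tau$ by itself does not suffice. I expect this to be the step needing care, and the remainder is bookkeeping of signs.
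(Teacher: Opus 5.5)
Your proof is correct and is essentially the paper's argument. It uses the same change of variables ($y=-a$, $x=b-a$, the paper's $c$ and $d$), followed by the same use of the commutation of $\sigma$ with $\ttau$ from Proposition \ref{tilde-comm} to rewrite the stray term $\tau(-\sigma(-y))$. Your sign bookkeeping, $-\tau(-\sigma(-y))=\sigma(-\tau(y))$, is in fact cleaner than the paper's displayed chain, which drops a minus sign in that intermediate identity before arriving at the same correct final equation.
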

\begin{proof}
By symmetry, it suffices to prove one implication. So suppose that $\sigma \in \Autmag(A,\tau)$. Again, (\ref{mag-aut-eqn}) holds for all $a$, $b \in A$. Writing $c=-a$ and $d=b-a$, we therefore have
\begin{equation} \label{mag-autom1}
	\sigma( \tau(d) - \tau(c)) = \tau(\sigma(d-c)-\sigma(-c)) - \tau( -
	\sigma(-c))	\mbox{ for all } c, d \in A. 
\end{equation}
But by Proposition \ref{tilde-comm}, 
$$ \tau(-\sigma(-c)) = -\ttau(\sigma(-c)) = \sigma(\ttau(-c)) = \sigma(-\tau(c)).$$
Thus 
$$ \sigma( \tau(d) - \tau(c)) - \sigma(-\tau(c)) = \tau(\sigma(d-c)-\sigma(-c)) 
\mbox{ for all } c, d \in A, $$
so that $\tau(c) \cdot_\sigma \tau(d) = \tau(c
\cdot_\sigma d)$. Hence $\tau \in \Autmag(A,\sigma)$.	
\end{proof}

\subsection{Conjugation by a group automorphism}
We now show that conjugating $\tau$ by a group automorphism does not change the magma
isomorphism class of $(A,\tau)$.

\begin{proposition}  \label{conjug}
Let $\tau \in \Symo(A)$, let $\alpha=\Autgp(A)$, and let $\omega=\alpha \tau \alpha^{-1}$. Then $\alpha: (A,\tau) \to (A,\omega)$ is an isomorphism of pointed magmas.
\end{proposition}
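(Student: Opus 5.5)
We verify directly that $\alpha$ satisfies the three requirements of a magma isomorphism in Definition \ref{mag-cat}: it is bijective, it fixes the distinguished element, and it is compatible with the operations. Bijectivity and $\alpha(0)=0$ hold because $\alpha$ is a group automorphism of $A$. In particular $\omega=\alpha\tau\alpha^{-1}$ is a permutation of $A$ fixing $0$, so $\omega\in\Symo(A)$ and $(A,\omega)$ is a pointed magma of the kind considered. So the only thing to check is the homomorphism identity
$$ \alpha(a \cdot_\tau b) = \alpha(a) \cdot_\omega \alpha(b) \quad \mbox{for all } a,b\in A. $$

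To prove it, we expand the right-hand side using (\ref{def-dot}) and the definition of $\omega$. This gives
$$\alpha(a)\cdot_\omega\alpha(b)=\omega(\alpha(b)-\alpha(a))-\omega(-\alpha(a)) = \alpha\tau\alpha^{-1}(\alpha(b)-\alpha(a)) - \alpha\tau\alpha^{-1}(-\alpha(a)).$$
Because $\alpha^{-1}$ is additive, $\alpha^{-1}(\alpha(b)-\alpha(a))=b-a$ and $\alpha^{-1}(-\alpha(a))=-a$. The expression therefore becomes $\alpha(\tau(b-a))-\alpha(\tau(-a))$. Using the additivity of $\alpha$ once more, this equals $\alpha(\tau(b-a)-\tau(-a))=\alpha(a\cdot_\tau b)$, which is the left-hand side.

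There is no real obstacle here. The only point needing care is to use additivity in the right places: $\alpha^{-1}$ is used to pull the differences $b-a$ and $-a$ back inside $\tau$, and $\alpha$ is used to recombine the two $\tau$-terms. Note that $\tau$ itself is never assumed additive. It would also be natural to remark that the inverse map $\alpha^{-1}:(A,\omega)\to(A,\tau)$ is a magma isomorphism too. This follows from the same computation with $\alpha$ replaced by $\alpha^{-1}$, since $\tau=\alpha^{-1}\omega\alpha$. It is not needed for the statement, because a bijective magma homomorphism is by definition a magma isomorphism.
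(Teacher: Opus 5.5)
Your proposal is correct and is the paper's proof: the same direct computation, using additivity of $\alpha^{-1}$ to pull $b-a$ and $-a$ inside $\tau$ and additivity of $\alpha$ to recombine the two $\tau$-terms. The paper runs it from $\alpha(a\cdot_\tau b)$ with $u=\alpha(a)$, $v=\alpha(b)$, and leaves implicit the bijectivity and $\alpha(0)=0$ checks that you state explicitly.
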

\begin{proof}
Let $a$, $b \in A$, and set $u=\alpha(a)$, $v=\alpha(b)$. Then 
\begin{eqnarray*} 
 \alpha(a \cdot_\tau b) & = & \alpha(\tau(b-a))-\alpha(\tau(-a)) \\
					  & = & \alpha \tau (\alpha^{-1}(v) -\alpha^{-1}(u) ) - \alpha \tau(-\alpha^{-1}(u)) \\
                      & = & \omega(v-u) - \omega(-u) \\
                      & = & u \cdot_\omega v.
\end{eqnarray*} 
\end{proof}

Recall from Definition \ref{def-c-ex} that $\tau \in \Symo(A)$ is a counterexample for $A$ if, for some $\sigma \in \Symo(A)$, we have 
\begin{equation} \label{st-conds}
	\sigma \in \Autmag(A,\tau), \qquad \Soc(A,\sigma) = \Soc(A,\tau) = \{0\}, 
\end{equation}

\begin{corollary} \label{conj-orbits}
If $\tau$ is a counterexample for A then so is $\alpha \tau \alpha^{-1}$ for every $\alpha \in \Autgp(A)$.	
\end{corollary}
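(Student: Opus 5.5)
Given a counterexample $\tau$ with witness $\sigma$, we transport the whole configuration along the group automorphism $\alpha$. Set $\omega=\alpha\tau\alpha^{-1}$ and $\rho=\alpha\sigma\alpha^{-1}$; both lie in $\Symo(A)$, since $\alpha$ fixes $0$. We then check that $\rho$ witnesses that $\omega$ is a counterexample. That is, we verify the three conditions of (\ref{st-conds}) for the pair $(\omega,\rho)$.

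\emph{Automorphism condition.} By Proposition \ref{conjug}, $\alpha:(A,\tau)\to(A,\omega)$ is an isomorphism of pointed magmas. Conjugation by a magma isomorphism carries $\Autmag(A,\tau)$ onto $\Autmag(A,\omega)$. Concretely, for $u,v\in A$ we write $u=\alpha(a)$ and $v=\alpha(b)$, and compute
$$\rho(u\cdot_\omega v)=\alpha\sigma(a\cdot_\tau b)=\alpha(\sigma(a)\cdot_\tau\sigma(b))=\rho(u)\cdot_\omega\rho(v).$$
Hence $\rho\in\Autmag(A,\omega)$.

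\emph{Socle conditions.} We show that $\Soc(A,\alpha\tau\alpha^{-1})=\alpha(\Soc(A,\tau))$ for any $\tau\in\Symo(A)$ and $\alpha\in\Autgp(A)$, working directly from (\ref{def-soc}). Since $\alpha$ is additive,
$$\omega(b+\alpha(a))=\alpha\tau(\alpha^{-1}(b)+a),$$
while
$$\omega(b)+\omega(\alpha(a))=\alpha\bigl(\tau(\alpha^{-1}(b))+\tau(a)\bigr).$$
As $b$ ranges over $A$, so does $\alpha^{-1}(b)$. It follows that $\alpha(a)\in\Soc(A,\omega)$ if and only if $a\in\Soc(A,\tau)$.

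Applying this to both $\tau$ and $\sigma$ gives
$$\Soc(A,\omega)=\alpha(\{0\})=\{0\}\quad\text{and}\quad \Soc(A,\rho)=\{0\}.$$
Alternatively, one could note that a magma isomorphism preserves the socle, since the socle is defined purely in terms of the operation ($a\cdot b=0\cdot b$ for all $b$) and $\alpha(0)=0$. This covers $\omega$, and the second socle follows by applying the same argument to $\sigma$.

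There is no real obstacle here. The only point requiring care is that the witness must also be conjugated: we use $\rho$ rather than $\sigma$ itself.
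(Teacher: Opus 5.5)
Your proposal is correct and is essentially the paper's proof: conjugate both $\tau$ and the witness $\sigma$ by $\alpha$ to get $\omega$ and $\rho$, then invoke Proposition~\ref{conjug}. You also write out two details the paper leaves implicit, namely the transported automorphism identity and the socle identity $\Soc(A,\alpha\tau\alpha^{-1})=\alpha(\Soc(A,\tau))$.
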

\begin{proof}
Suppose that $\sigma$ is as in (\ref{st-conds}), and set $\omega = \alpha \tau \alpha^{-1}$ and $\rho = \alpha \sigma \alpha^{-1}$. By Proposition \ref{conjug}, $\rho \in \Autmag(A,\omega)$ and $\Soc(A,\omega)=\Soc(A,\rho)=\{0\}$. Hence $\omega$ is a counterexample for $A$.   
\end{proof}

\subsection{Some properties of the socle}

In general, $\Soc(A,\tau)$ need not be a submagma of $(A,\tau)$, and need not be fixed by $\tau$.

\begin{example}  \label{soc-not-submag}
	Let $A=C_3 \times C_3$. For brevity write the element $(a,b)\in A$ as $ab$. Let $\tau=(01,10)(02,20,12,21,22)$. The magma operation for $(A,\tau)$ is shown in Table \ref{C3C3-ex}. Here $\Soc(A,\tau)=\{00,01,02\}$, which is not a submagma since, for example $01 \cdot 01 =10 \not \in \Soc(A,\tau)$.  Moreover $\tau(\Soc(A,\tau))=\{00,10,20\} \neq \Soc(A,\tau)$.
\end{example}
\begin{table}[ht]
	\centerline{
		\begin{tabular}{|c|ccccccccc|} \hline
			& $00$ & $01$ & $02$ & $10$ & $11$ & $12$ & $20$ & $21$ & $22$ \\ \hline
			$00$ & $00$ & $10$ & $20$ & $01$ & $11$ & $21$ & $12$ & $22$ & $02$ \\ 
			$01$ & $00$ & $10$ & $20$ & $01$ & $11$ & $21$ & $12$ & $22$ & $02$ \\ 
			$02$ & $00$ & $10$ & $20$ & $01$ & $11$ & $21$ & $12$ & $22$ & $02$ \\ 			
			$10$ & $00$ & $10$ & $20$ & $21$ & $01$ & $11$ & $22$ & $02$ & $12$ \\ 
			$11$ & $00$ & $10$ & $20$ & $21$ & $01$ & $11$ & $22$ & $02$ & $12$ \\ 
			$12$ & $00$ & $10$ & $20$ & $21$ & $01$ & $11$ & $22$ & $02$ & $12$ \\ 
			$20$ & $00$ & $10$ & $20$ & $11$ & $21$ & $01$ & $02$ & $12$ & $22$ \\ 
			$21$ & $00$ & $10$ & $20$ & $11$ & $21$ & $01$ & $02$ & $12$ & $22$ \\ 
			$22$ & $00$ & $10$ & $20$ & $11$ & $21$ & $01$ & $02$ & $12$ & $22$ \\ \hline
		\end{tabular}
	}  
	\vskip4mm
	\caption{Magma operation for Example \ref{soc-not-submag}} 	\label{C3C3-ex}
\end{table}

\begin{proposition} \label{inv-tau}
	For any $\tau \in \Symo(A)$, we have
	$$\Soc(A,\tau^{-1}) = \tau^{-1}(\Soc(A,\tau)). $$
\end{proposition}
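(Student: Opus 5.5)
The plan is to prove two inclusions directly from the additive description (\ref{def-soc}) of the socle, applied once to $\tau$ and once to $\tau^{-1}$. Write $S=\Soc(A,\tau)$ and $S'=\Soc(A,\tau^{-1})$. By (\ref{def-soc}), $a\in S$ means $\tau(b+a)=\tau(b)+\tau(a)$ for all $b\in A$. Likewise $c\in S'$ means $\tau^{-1}(d+c)=\tau^{-1}(d)+\tau^{-1}(c)$ for all $d\in A$. Since $\tau$ is a bijection, every $d\in A$ can be written as $d=\tau(b)$. So the second condition can be rewritten purely in terms of $\tau$ by substituting $d=\tau(b)$ and, for a candidate element $c$, $c=\tau(a)$ or $c=\tau^{-1}(a)$ with $a\in S$.

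First step: take $a\in S$ and $c=\tau(a)$. For any $d=\tau(b)$ we get $d+c=\tau(b)+\tau(a)=\tau(b+a)$. Applying $\tau^{-1}$ gives $\tau^{-1}(d+c)=b+a=\tau^{-1}(d)+\tau^{-1}(c)$, so $c\in S'$. The same argument with $\tau$ and $\tau^{-1}$ interchanged gives the reverse inclusion. This route therefore yields the identity $S'=\tau(S)$.

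Second step, which is where the stated form has to be reached: I would need $\tau(S)=\tau^{-1}(S)$, for example by showing that $\tau$ permutes $S$. Equivalently, I would need $\tau(S)\subseteq S$. The natural attempt uses that $\tau|_S$ is an injective group homomorphism $S\to A$ (take $b\in S$ in (\ref{def-soc})). It then remains to show that its image lies in $S$.

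This is the main obstacle, and I expect it to fail as literally stated. Example \ref{soc-not-submag} records that $\tau(S)\neq S$ there. Checking that example directly, with the cycle convention $02\mapsto20\mapsto12\mapsto21\mapsto22\mapsto02$, gives $\tau^{-1}(S)=\{00,10,22\}$. This is not even a subgroup, whereas $S'$ must be one by \cite[Proposition 5]{Rump16}.

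So before writing the proof I would first pin down the intended convention. Either permutations act on the right, so that the paper's ``$\tau^{-1}(S)$'' denotes what I have called $\tau(S)$, or the statement is to be read as $\Soc(A,\tau^{-1})=\tau(\Soc(A,\tau))$. Under that reading the two inclusions above constitute the complete proof, and I would present it in that form.
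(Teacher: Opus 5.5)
Your argument is the paper's own: its chain of equivalences substitutes $\tau(b)$ and $\tau(a)$ into the additive socle condition for $\tau^{-1}$ and shows $\tau(a)\in\Soc(A,\tau^{-1})\Leftrightarrow a\in\Soc(A,\tau)$, which is exactly $\Soc(A,\tau^{-1})=\tau(\Soc(A,\tau))$. Your objection is also correct: the printed $\tau^{-1}(\Soc(A,\tau))$ is a misprint that the paper's ``The result follows'' glosses over, and your check of Example~\ref{soc-not-submag} settles it, since there $\Soc(A,\tau^{-1})=\{00,10,20\}$ while $\tau^{-1}(\Soc(A,\tau))=\{00,10,22\}$ is not even a subgroup (the table fixes the left-action convention, so the right-action reading is not available), although the literal statement does hold for cyclic $A$, where $\tau(\Soc(A,\tau))=\Soc(A,\tau)$ by the proof of Proposition~\ref{Soc-k-prop}.
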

\begin{proof}
	For $a \in A$, we have
	\begin{eqnarray*}
		\lefteqn{\tau(a) \in \Soc(A,\tau^{-1}) } \\ & \LRA & \tau^{-1}(\tau(b)+\tau(a)) = \tau^{-1}(\tau(b)) + \tau^{-1}(\tau(a)) \mbox{ for all }b \in A \\
		& \LRA & \tau(b) + \tau(a) = \tau(b+a) \mbox{ for all }b \in A \\
		& \LRA & a \in\Soc(A,\tau).
	\end{eqnarray*}	
	The result follows.
\end{proof}

We can say more when $A$ is cyclic.

\begin{proposition} \label{Soc-k-prop}
	Let $A$ be a finite cyclic group, and let $\tau \in \Symo(A)$. For all $k \geq 1$, we have
	\begin{equation} \label{Soc-k} 
		\Soc(A,\tau) \subseteq \Soc(A,\tau^k) 
	\end{equation}
	with equality if $k$ is coprime to the order of $\tau$.
\end{proposition}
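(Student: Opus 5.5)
The plan is to show that $S=\Soc(A,\tau)$ is mapped onto itself by $\tau$ when $A$ is cyclic, and then to iterate the defining identity (\ref{def-soc}).

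\emph{Step 1: $\tau(S)=S$.} By \cite[Proposition 5]{Rump16}, $S$ is a subgroup of $A$. For $a,a'\in S$, (\ref{def-soc}) with $b=a'$ gives $\tau(a'+a)=\tau(a')+\tau(a)$. So $\tau|_S\colon S\to A$ is a group homomorphism, and it is injective because $\tau$ is a permutation. Hence $\tau(S)$ is a subgroup of $A$ with $|\tau(S)|=|S|$. Since $A$ is finite cyclic, it has a unique subgroup of each order dividing $|A|$, and therefore $\tau(S)=S$. This is the only place cyclicity is used, and it is the key point: Example \ref{soc-not-submag} shows that $\tau(\Soc(A,\tau))\neq\Soc(A,\tau)$ can occur for non-cyclic $A$.

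\emph{Step 2: induction on $k$.} We show that for every $k\ge 1$, every $a\in S$ and every $b\in A$,
$$\tau^k(b+a)=\tau^k(b)+\tau^k(a).$$
The case $k=1$ is (\ref{def-soc}). For the inductive step, assume the identity for $k$. Then
$$\tau^{k+1}(b+a)=\tau\bigl(\tau^k(b)+\tau^k(a)\bigr).$$
By Step 1, $\tau^k(a)\in S$, so applying (\ref{def-soc}) with the socle element $\tau^k(a)$ gives
$$\tau\bigl(\tau^k(b)+\tau^k(a)\bigr)=\tau^{k+1}(b)+\tau^{k+1}(a).$$
By (\ref{def-soc}) applied to $\tau^k$, the identity says exactly that $a\in\Soc(A,\tau^k)$. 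This proves (\ref{Soc-k}).

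\emph{Step 3: equality.} Let $m$ be the order of $\tau$ and suppose $\gcd(k,m)=1$. Choose $l\ge 1$ with $kl\equiv 1 \pmod m$, so that $(\tau^k)^l=\tau$. Applying (\ref{Soc-k}) to the permutation $\tau^k\in\Symo(A)$ with exponent $l$ gives
$$\Soc(A,\tau^k)\subseteq\Soc(A,\tau^{kl})=\Soc(A,\tau).$$
Combined with Step 2, this yields equality. Step 1 is the main obstacle; once $S$ is known to be $\tau$-stable, the rest is routine.
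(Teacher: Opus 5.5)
Your proposal is correct and follows essentially the same route as the paper. You use cyclicity to get $\tau(\Soc(A,\tau))=\Soc(A,\tau)$ from the injective homomorphism $\tau|_{\Soc(A,\tau)}$, iterate the defining identity of the socle by induction on $k$, and get equality when $\gcd(k,m)=1$ by applying the inclusion to $\tau^k$ with an exponent $l$ satisfying $kl\equiv 1 \pmod m$.
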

\begin{proof}
	It follows from (\ref{def-soc}) that restricting $\tau$ to $\Soc(A,\tau)$ gives an injective group homomorphism $\Soc(A,\tau) \to A$. Its image is therefore a subgroup of $A$ isomorphic to $\Soc(A,\tau)$. Since $A$ is cyclic, we have $\tau(\Soc(A,\tau))=\Soc(A,\tau)$. Inductively, for all $k \geq 1$ we have
	$$ \tau^k(\Soc(A,\tau)) = \Soc(A,\tau) $$
	and
	\begin{equation}  \label{tau-k}
		\tau^k(a+b)=\tau^k(a) + \tau^k(b) \mbox{ for } a \in \Soc(A,\tau), b \in A. 
	\end{equation}
	
Now (\ref{Soc-k}) follows immediately from (\ref{tau-k}). If $\tau$ has order $m$ and $\gcd(k,m)=1$, take $h \geq 1$ with $kh \equiv 1 \pmod{m}$. Then
$$ 	\Soc(A,\tau) \subseteq \Soc(A,\tau^k) \subseteq \Soc(A,(\tau^k)^h)=\Soc(A,\tau), $$  
giving equality in (\ref{Soc-k}).
\end{proof}

\begin{corollary} \label{cyclic-powers}
Let $A$ be a finite cyclic group and let $\tau$ be a counterexample for $A$. Let $m$ be the order of $\tau$. If $\gcd(k,m)=1$ then $\tau^k$ is also a counterexample for $A$.
\end{corollary}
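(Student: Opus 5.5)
Since $\tau$ is a counterexample for $A$, we may fix $\sigma \in \Symo(A)$ with $\sigma \in \Autmag(A,\tau)$ and $\Soc(A,\sigma)=\Soc(A,\tau)=\{0\}$. We will show that the same $\sigma$ witnesses that $\tau^k$ is a counterexample. This needs two things: $\Soc(A,\tau^k)=\{0\}$, and $\sigma \in \Autmag(A,\tau^k)$.

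The first is immediate. Because $A$ is cyclic and $\gcd(k,m)=1$, Proposition \ref{Soc-k-prop} gives $\Soc(A,\tau^k)=\Soc(A,\tau)=\{0\}$.

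For the second, we use the reciprocity of Proposition \ref{recip}. Since $\sigma \in \Autmag(A,\tau)$, we get $\tau \in \Autmag(A,\sigma)$. Now $\Autmag(A,\sigma)$ is a group, so it also contains $\tau^k$. Applying Proposition \ref{recip} again, $\tau^k \in \Autmag(A,\sigma)$ gives $\sigma \in \Autmag(A,\tau^k)$. Together with $\Soc(A,\sigma)=\{0\}$, this verifies (\ref{st-conds}) for the pair $(\tau^k,\sigma)$, so $\tau^k$ is a counterexample for $A$.

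No step here is a serious obstacle. The only point needing care is that a direct check that $\sigma$ respects $\cdot_{\tau^k}$ would be awkward, since $\cdot_{\tau^k}$ is not built from $\cdot_\tau$ in any simple way. Passing through $\Autmag(A,\sigma)$, where $\tau$ appears as an element of a group, avoids this. Note also that $k \geq 1$ may be assumed: if $k$ is nonpositive, replace it by a positive integer congruent to it modulo $m$.
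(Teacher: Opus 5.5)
Your proposal is correct and is essentially the paper's own proof. The paper likewise uses Proposition~\ref{recip} to place $\tau$, and hence $\tau^k$, in the group $\Autmag(A,\sigma)$, applies reciprocity again to get $\sigma \in \Autmag(A,\tau^k)$, and invokes Proposition~\ref{Soc-k-prop} for $\Soc(A,\tau^k)=\{0\}$. Your remark about reducing to $k \geq 1$ is a small detail that the paper leaves implicit.
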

\begin{proof}
Let $\sigma \in \Symo(A)$ be such that $\sigma$, $\tau$ satisfy (\ref{st-conds}). Then $\tau \in \Autmag(A,\sigma)$ by Proposition \ref{recip}, and, since $\Autmag(A,\sigma)$ is a group, $\tau^k \in \Autmag(A.\sigma)$. Using Proposition \ref{recip} again, we have $\sigma \in \Autmag(A,\tau^k)$, and, by Proposition \ref{Soc-k-prop}, $\Soc(A,\tau^k)=\{0\}$. Hence $\tau^k$ is a counterexample for $A$. 
\end{proof}

\subsection{Restatement of Conjecture \ref{main-conj} for $|A|$ prime}

\begin{proposition}
Let $A$ be a group of prime order, and let $\tau \in \Symo(A)$. Then Conjecture \ref{main-conj} for $(A,\tau)$ is equivalent to the following description of $\Autmag(A,\tau)$:
$$ \Autmag(A,\tau) = \begin{cases}
 \{\sigma \in \Symo(A) : \tau \sigma = \sigma \tau\} & \mbox{if } \tau \in \Autgp(A), \\
 \{\sigma \in \Autgp(A) : \tau \sigma = \sigma \tau\} & \mbox{if } \tau \not \in \Autgp(A).
\end{cases} $$
\end{proposition}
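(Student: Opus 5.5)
The plan is to use the fact that, for $|A|=p$ prime, $\Soc(A,\rho)$ is a subgroup of $A$ for every $\rho\in\Symo(A)$. Hence either $\Soc(A,\rho)=\{0\}$ or $\Soc(A,\rho)=A$, and the latter holds exactly when $\rho\in\Autgp(A)$. So Conjecture \ref{main-conj} for $(A,\tau)$ reads: for every $\sigma\in\Autmag(A,\tau)$, at least one of $\tau,\sigma$ lies in $\Autgp(A)$. We then compare this with the displayed description, splitting according to whether $\tau\in\Autgp(A)$.

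\emph{Case $\tau\in\Autgp(A)$.} Here the conjecture for $(A,\tau)$ holds automatically, since $\Soc(A,\tau)=A\neq\{0\}$. The displayed description also holds unconditionally, by Proposition \ref{autom}. So both sides of the claimed equivalence are true, and hence the two are equivalent in this case.

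\emph{Case $\tau\notin\Autgp(A)$.} Now $\Soc(A,\tau)=\{0\}$, so the conjecture for $(A,\tau)$ says exactly that every $\sigma\in\Autmag(A,\tau)$ lies in $\Autgp(A)$. By Proposition \ref{tilde-comm}, every $\sigma\in\Autmag(A,\tau)$ commutes with $\tau$, so the conjecture is equivalent to
\[
\Autmag(A,\tau)\subseteq\{\sigma\in\Autgp(A):\tau\sigma=\sigma\tau\}.
\]
It therefore remains to prove the reverse inclusion unconditionally: if $\sigma\in\Autgp(A)$ commutes with $\tau$, then $\sigma\in\Autmag(A,\tau)$. This is the only genuine step. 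One route is Proposition \ref{autom} applied to $\sigma$: since $\sigma\in\Autgp(A)$ and $\tau$ commutes with $\sigma$, we get $\tau\in\Autmag(A,\sigma)$, and Proposition \ref{recip} then gives $\sigma\in\Autmag(A,\tau)$. Alternatively, one can check directly that
\[
\sigma(\tau(b-a)-\tau(-a))=\tau(\sigma(b)-\sigma(a))-\tau(-\sigma(a)),
\]
using additivity of $\sigma$ and $\sigma\tau=\tau\sigma$. With both inclusions established, the conjecture in this case is equivalent to the stated equality.

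Combining the two cases gives the equivalence. The main point to handle carefully is the logic of the first case: there the equivalence holds because both statements are true outright, not because one implies the other.
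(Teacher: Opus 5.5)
Your proposal is correct and follows the paper's proof essentially step for step. In the automorphism case you invoke Proposition~\ref{autom}. In the other case you combine the dichotomy $\Soc(A,\sigma)\in\{\{0\},A\}$ with Proposition~\ref{tilde-comm} to get one inclusion. For the reverse inclusion you use Proposition~\ref{autom} with the roles of $\sigma$ and $\tau$ swapped, followed by Proposition~\ref{recip}, exactly as the paper does.
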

\begin{proof}
For arbitrary $A$, Proposition \ref{autom} shows unconditionally that if $\tau \in \Autgp(A)$ then $\Autmag(A,\tau) = \{\sigma \in \Symo(A) : \tau \sigma = \sigma \tau\}$.

Now suppose $A$ has prime order and that $\tau \not \in \Autgp(A)$. Then $\Soc(A,\tau) \neq A$, so $\Soc(A,\tau)=\{0\}$. If 
$\Autmag(A,\tau) = \{\sigma \in \Autgp(A) : \tau \sigma = \sigma \tau\}$ then, for any $\sigma \in \Autmag(A,\tau)$, we have  $\Soc(A,\sigma)=A \neq \{0\}$. Thus Conjecture \ref{main-conj} holds for $(A,\tau)$. Conversely, assume that Conjecture \ref{main-conj} holds for $(A,\tau)$. If $\sigma \in \Autmag(A,\tau)$ then $\Soc(A,\sigma) \neq \{0\}$, so $\Soc(A,\sigma)=A$ and $\sigma \in \Autgp(A)$. Moreover $\tau \sigma = \sigma \tau$ by Proposition \ref{tilde-comm}. This shows the inclusion $\Autmag(A,\tau) \subseteq  \{\sigma \in \Autgp(A) : \tau \sigma = \sigma \tau\}$. For the reverse inclusion, let $\sigma \in \Autgp(A)$ with $\tau \sigma =\sigma \tau$. Then $\tau \in \Autmag(A,\sigma)$ by the first case of the Proposition (with $\tau$ and $\sigma$ interchanged), so $\sigma \in \Autmag(A,\tau)$ by Proposition \ref{recip}. 
\end{proof}
 	
\section{Transpositions}

In this section, we prove that Conjecture \ref{main-conj} holds if $A$ is cyclic of prime order and $\tau$ is a transposition. In fact we prove a slightly more general result in Theorem \ref{trans-thm}. 

For an abelian group $A$ and a permutation $\tau \in \Symo(A)$, we define the support of $\tau$ to be
$$ \Supp(\tau) = \{a \in A : \tau(a) \neq a \}. $$

\begin{proposition} \label{S-lem}
Let $A$ be an arbitrary abelian group and let $\tau \in \Symo(A)$. Let
$$ S= \Supp(\ttau) = \{ a \in A : -\tau(-a) \neq a \},  $$
and let $\sigma \in \Autmag(A,\tau)$. Then, for each $b \in A$, we have  
	$$ \sigma(b+S) \backslash S =(\sigma(b)+S) \backslash S. $$
\end{proposition}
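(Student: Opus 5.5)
The plan is to characterise, purely in terms of the magma operation $\cdot_\tau$ and the set $S$, when an element $c \notin S$ lies in $b+S$. Once this is done, applying $\sigma$ and using that $\sigma$ preserves $S$ will give the identity.

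\emph{Step 1: $\sigma$ preserves $S$.} By Proposition \ref{tilde-comm}, $\sigma$ commutes with $\ttau$. Hence $\ttau(a)=a$ if and only if $\ttau(\sigma(a))=\sigma(a)$, so $\sigma(A\setminus S)=A\setminus S$. Since $\sigma$ is a bijection, also $\sigma(S)=S$.

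\emph{Step 2: the key criterion.} I claim that for $c \notin S$ and any $b\in A$,
$$ c \in b+S \iff c\cdot_\tau b \neq b. $$
To see this, write $c \cdot_\tau b = \tau(b-c)+\ttau(c)$. Since $c\notin S$, we have $\ttau(c)=c$, so $c\cdot_\tau b = \tau(b-c)+c$. Hence $c\cdot_\tau b=b$ exactly when $\tau(b-c)=b-c$. Now $c-b\in S$ means $-\tau(-(c-b))\neq c-b$, that is, $\tau(b-c)\neq b-c$. So $c\cdot_\tau b = b$ precisely when $c-b\notin S$, i.e.\ when $c \notin b+S$, which is the claim.

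\emph{Step 3: transport by $\sigma$.} Let $c\in A$ and put $c'=\sigma^{-1}(c)$. By Step 1, $c\notin S$ if and only if $c'\notin S$. Assuming this holds, we have
\begin{align*}
c\in\sigma(b+S) &\iff c'\in b+S \iff c'\cdot_\tau b\neq b\\
&\iff \sigma(c')\cdot_\tau\sigma(b)\neq\sigma(b) \iff c\cdot_\tau \sigma(b)\neq \sigma(b) \iff c\in\sigma(b)+S .
\end{align*}
The first and last equivalences use Step 2. The middle equivalence holds because $\sigma$ is an injective magma homomorphism, so $\sigma(c'\cdot_\tau b)=\sigma(c')\cdot_\tau\sigma(b)$. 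Restricting to $c\notin S$ then gives $\sigma(b+S)\setminus S=(\sigma(b)+S)\setminus S$.

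The only real obstacle is finding the criterion in Step 2. Everything else is a direct application of Proposition \ref{tilde-comm} and the homomorphism property.
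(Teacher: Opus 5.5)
Your proof is correct and follows the paper's argument almost step for step: first $\sigma(S)=S$ because $\sigma$ commutes with $\ttau$, then the criterion that for $c\notin S$ one has $c\in b+S$ if and only if $c\cdot_\tau b\neq b$ (using $\tau(-c)=-c$), then transport through the homomorphism property and injectivity of $\sigma$. One small mislabel in Step 3: Step 2 is used in the second and last equivalences, while the first is just the bijectivity of $\sigma$.
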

\begin{proof}
First observe that $\sigma(S)=S$ since, by Proposition \ref{tilde-comm}, $\sigma$ commutes with $\ttau$. 

For $a \in A \bs S$ and $b \in A$, we have 
\begin{eqnarray*}
  a \in b+S & \LRA & a-b \in S \\
  			& \LRA & \tau(b-a) \neq b-a \\
            & \LRA & \tau(b-a) - \tau(-a) \neq b-a -(-a) \\
            & \LRA & a \cdot_\tau b \neq b.
\end{eqnarray*}
Fix $b \in A$. For $a \in A$ we have $\sigma(a) \not \in S$ if and only if $a \not \in S$. Thus if $\sigma(a) \not \in S$, we have 
\begin{eqnarray*}
	\sigma(a) \in \sigma(b+S) & \LRA & a \in b+S \\
	                          & \LRA & a \cdot_\tau b \neq b \\
	                          & \LRA & \sigma(a) \cdot_\tau \sigma(b) \neq \sigma(b) \\
	                          & \LRA & \sigma(a)  \in \sigma(b) + S.
\end{eqnarray*}
Hence $\sigma(b+S) \bs S = (\sigma(b)+S) \bs S$.
\end{proof}

\begin{theorem} \label{trans-thm}
Let $A=C_n$ with $n>2$, and let $\tau \in \Symo(A)$ be a transposition $\tau=(ab)$ where $\gcd(b-a,n)=1$. Then	
\begin{equation} \label{transp-aut}
		\Autmag(A,\tau) = \begin{cases} \{\id\} & \mbox{if } b \neq -a; \\
		\{\id, \inv\} & \mbox{if } b =-a;
	\end{cases} 
\end{equation}
where $\inv(x)=-x$. In particular, Conjecture \ref{main-conj} holds for $(A, \tau)$. 
\end{theorem}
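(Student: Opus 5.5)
The plan is to exploit Proposition \ref{S-lem} together with the fact that $d=b-a$ generates $C_n$. Here $\ttau$ is the transposition $(-a,-b)$, so $S=\Supp(\ttau)=\{-a,-b\}$. By Proposition \ref{tilde-comm}, any $\sigma\in\Autmag(A,\tau)$ preserves both $\{a,b\}$ and $S$, and hence also $A\bs S$. Since $-a=-b+d$, for $c\in A$ we have $c+S=\{x,x+d\}$ with $x=c-b$. Thus $S$ itself is the pair $\{-b,-b+d\}$, which consists of two consecutive points on the $n$-cycle $x\mapsto x+d$. That $x\mapsto x+d$ is a single $n$-cycle uses $\gcd(d,n)=1$.

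\textbf{Step 1: $\sigma$ acts on $A\bs S$ as an automorphism of a path.} Form the graph $\Gamma$ on $A\bs S$ with edges $\{x,x+d\}$ whenever both endpoints lie outside $S$. Since $x\mapsto x+d$ is an $n$-cycle and we delete two adjacent vertices, $\Gamma$ is a path on $n-2$ vertices:
$$-b+2d,\ -b+3d,\ \dots,\ -b-d.$$
Take $c$ with $c+S$ disjoint from $S$. Because $\sigma$ preserves $S$, we have $\sigma(c+S)\bs S=\sigma(c+S)$, which has two elements. Proposition \ref{S-lem} then forces $(\sigma(c)+S)\bs S$ to have two elements, so it equals $\sigma(c)+S$, which is again an edge of $\Gamma$. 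Hence $\sigma$ maps edges of $\Gamma$ to edges, and so $\sigma|_{A\bs S}$ is either the identity or the reversal of the path. The reversal is $r(x)=(-b+2d)+(-b-d)-x=-(a+b)-x$. (For $n=3$ the path is a single vertex, and $r$ still describes it.)

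\textbf{Step 2: determine $\sigma$ on $S$ and eliminate the bad case.} Next use the values of $c$ for which $c+S$ meets $S$ in exactly one point, namely $c+S=\{-b-d,-b\}$ or $\{-b+d,-b+2d\}$. In each case Proposition \ref{S-lem} says $\sigma$ sends the outside endpoint to the outside point of some translate $y+S$. This links the end vertices of the path to the adjacent points of $S$.

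If $\sigma$ is the identity on $\Gamma$, this shows $\sigma$ fixes the endpoints of $S$ and hence is the identity; here I would additionally use that $\sigma$ commutes with $\tau$. If $\sigma$ is the reversal on $\Gamma$, the same argument shows $\sigma$ swaps $-a$ and $-b$, so $\sigma(x)=-(a+b)-x$ for all $x$. But $\sigma(0)=0$ forces $a+b=0$, so $\sigma=\inv$.

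I expect this endpoint bookkeeping to be the main obstacle. The small cases $n=3,4$, and the situation where $a$ or $b$ lies in $S$, may need separate care. There I would fall back on (\ref{mag-aut-eqn}) with $a$ or $b$ specialized to elements of $\{a,b,-a,-b\}$.

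\textbf{Step 3: the converse and the conjecture.} When $b=-a$, the map $\inv$ is a group automorphism commuting with $\tau$, because $-\tau(-x)=\tau(x)$ in this case. So $\tau\in\Autmag(A,\inv)$ by Proposition \ref{autom}, and hence $\inv\in\Autmag(A,\tau)$ by Proposition \ref{recip}. This establishes (\ref{transp-aut}). Finally, every $\sigma\in\Autmag(A,\tau)$ is then a group automorphism, so $\Soc(A,\sigma)=A\neq\{0\}$, and Conjecture \ref{main-conj} holds for $(A,\tau)$.
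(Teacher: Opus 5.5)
Your Step 1 is correct. It repackages the paper's bookkeeping with the sets $T'_j=T_j\bs S$: the translates of $S$ disjoint from $S$ are exactly the edges of $\Gamma$, so $\sigma|_{A\bs S}$ is the identity or the reversal. Since $0$ is a vertex of $\Gamma$, the reversal case gives $a+b=0$ at once.

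The gap is in Step 2, which carries the remaining content of the theorem. After Step 1 you only know $\sigma\in\{\id,\ttau\}$, or, when $b=-a$ (so that $\ttau=\tau$), $\sigma\in\{\inv,\inv\circ\tau\}$. What must be shown is that $\ttau=(-a,-b)\notin\Autmag(A,\tau)$, and your proposed mechanism cannot do this:

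- The translates meeting $S$ in exactly one point are $c+S$ with $c=\pm d$. Proposition \ref{S-lem} discards the points of $S$, so it only tells you that $\sigma$ maps $\{d,-d\}$ to itself, compatibly with its action on the end vertices of $\Gamma$.
- Unless $b=2a$ or $a=2b$, both $\pm d$ lie outside $S$. So this information concerns $\sigma|_{A\bs S}$, which Step 1 has already determined, and nothing links the path to the two points of $S$.
- Commutation with $\tau$ does not help either. When $b\neq -a$ and $2a,2b\neq 0$, the transpositions $(a,b)$ and $(-a,-b)$ are disjoint and commute.

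In that generic situation $\ttau$ preserves $S$, commutes with $\tau$ and $\ttau$, and satisfies the identity of Proposition \ref{S-lem} for every $c\notin S$. So no argument built only from these ingredients can exclude it.

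You need an input that actually sees $\sigma$ on $S$. The paper uses a direct computation. After swapping $a,b$ so that $b\neq 2a$, and assuming $n>3$, take $x=-2a$ and $y=b-2a$. Then $\ttau(x)=x$ and $\ttau(y)=y$, but
$\ttau(x\cdot_\tau y)=\ttau(-a)=-b$, whereas $\ttau(x)\cdot_\tau\ttau(y)=x\cdot_\tau y=-a$.
It follows that $\inv\circ\ttau\notin\Autmag(A,\tau)$, because $\inv\in\Autmag(A,\tau)$ (your Step 3) and $\Autmag(A,\tau)$ is a group.

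Alternatively, you can stay within your own framework by applying Proposition \ref{S-lem} with the translation parameter inside $S$. Suppose $\sigma$ fixes $A\bs S$ pointwise and swaps $-a,-b$. Take $c=-a$ and set $y=-a-b\notin S$. Then $-a+S=\{y,y+d\}$ and $-b+S=\{y-d,y\}$, and the required equality $\{y,y+d\}\bs S=\{y-d,y\}\bs S$ fails for $n>3$:

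- generically, because $2d\neq 0$;
- when $b=2a$ or $a=2b$, because the two sides have different sizes.

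Either way, this step has to be proved. As written, Step 2 asserts its conclusion rather than deriving it.
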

\begin{proof}

We begin with a preliminary observation. If $b=2a$ and $a=2b$ in $A$ then $3(b-a)=0$ and the coprimality hypothesis forces $n=3$. For $n=3$, we necessarily have $\tau=(12)=\inv$ and $\Autmag(A,\tau)=\{\id, \tau \}=\Autgp(A,\tau)$. Thus the Theorem holds for $n=3$. We henceforth assume that $n>3$, and further (by swapping $a$ and $b$ if necessary) that $b \neq 2a$. 

In the notation of Proposition \ref{S-lem}, we have 
$$ S= \Supp(\ttau)=	\{-a,-b\}. $$ 
For $0 \leq j \leq n-1$, let $T_j=a+j(a-b)+S = \{j(a-b), (j+1)(a-b)\}$. In particular, 
$$  	T_0 =  \{ 0, a-b\},  \qquad 	T_{n-1} = \{ b-a, 0 \}.  $$ 
The sets $T_j$ are pairwise distinct by the coprimality hypothesis, and for each $c \neq 0$ in $A$ there is a unique index $i$ such that 
$$ c \in T_j \LRA j=i \mbox{ or } i+1. $$ 
Moreover, there is a unique $j_0$ with $1 \leq j_0 \leq n-2$ such that $T_{j_0} = S= \{-a,-b\}$. We then have $T_{j_0-1} = \{b-2a, -a\}$ and $T_{j_0+1} = \{ -b, a-2b \}$. 

Next consider the sets $T'_j=T_j \bs S$. We have $T'_{j_0}= \emptyset$ and 
$$ T'_{j_0-1}=\{b-2a\}=\{(j_0-1)(a-b)\}, \quad T_{j_0+1} = \{(j_0+2)(a-b)\} = \{a-2b \}, $$ 
while $T'_j=T_j$ if $j \neq j_0$, $j_0 \pm 1$. 

Now let $\sigma \in \Autmag(A,\tau)$. By Proposition \ref{S-lem}, $\sigma$ permutes the sets $T'_j$. In particular, as $\sigma(0)=0$, the two sets $T'_0$, $T'_{n-1}$ containing $0$ must either be fixed by $\sigma$ or swapped by $\sigma$. 
We consider these two possibilities separately.
	
First suppose that $\sigma(T'_0)=T'_0$ and $\sigma(T'_{n-1}) = T'_{n-1}$. We will show that in this case $\sigma=\id$.
We first show by induction that for $0 \leq k \leq j_0-1$ we have 
\begin{equation} \label{transp-ind}
	\sigma(T'_k)=T'_k, \qquad \sigma(k(a-b))=k(a-b).
\end{equation}
This is true for $k=0$. Suppose that (\ref{transp-ind}) holds for some $k<j_0-1$. Then $T'_k=\{k(a-b),(k+1)(a-b)\}$ is fixed by $\sigma$. Since $\sigma$ fixes $k(a-b)$, it must also fix $(k+1)(a-b)$. Since $(k+1)(a-b) \in T'_j$ if and only if $j=k$ or $k+1$, and $\sigma$ fixes $T'_k$, it must also fix $T'_{k+1}$. Hence (\ref{transp-ind}) also holds for $k+1$, completing the induction. A similar argument, using descending induction and starting from $\sigma(T'_{n-1})=T'_{n-1}$, shows that (\ref{transp-ind}) also holds for $n-1 \geq k \geq j_0+2$. Thus $\sigma(c)=c$ for all $c \in A$ with the possible exceptions of $c=j_0(b-a)=-a$ and $c=(j_0+1)(b-a)=-b$. Hence $\sigma=\id$ or $\sigma=(-a,-b)=\ttau$. We will show that $\ttau$ cannot be a magma automorphism when $n>3$. Let $x=-2a$ and $y=b-2a$. Since $a \neq b$ and we have assumed that $b \neq 2a$, we have $\tau(-x)=-x$ and $\ttau(x)=x$. As $n>2$ and $\gcd(b-a,n)=1$, we have $2(b-a) \neq 0$, so that $b-2a \neq -a$, $-b$. Thus $\ttau(y)=y$. We now calculate
\begin{eqnarray*}
	 \ttau( x \cdot_\tau y) & = & \ttau(\tau(y-x)-\tau(-x)) \\
	                         & = & \ttau(\tau(b)-\tau(-x)) \\
	                         & = & \ttau(a-2a) \\
	                         & = & -b
\end{eqnarray*}
and
\begin{eqnarray*}
	\ttau(x) \cdot_\tau \ttau(y) & = & \tau( \ttau(y)-\ttau(x)) - \tau( -\ttau(x)) \\ 
								& = & \tau(y-x) - \tau(-x) \\
								& = & \tau(b)-\tau(-x) \\
								& = & a -(-x) \\
								& = & -a.						
\end{eqnarray*}        
As $-b \neq -a$ in $A$, this shows that $\ttau \not \in \Autmag(A,\tau)$.

Now suppose that $T'_0$ and $T'_{n-1}$ are swapped by the magma automorphism $\sigma$. We will show that in this case the only possibility is $\sigma =\inv$, and this can only occur if $b=-a$. 
We first show by induction that for $0 \leq k \leq j_0-1$, the following hold:
\begin{equation} \label{inv-bound}
	n-1-k \geq j_0+1; 
\end{equation}
\begin{equation} \label{inv-ind}
	\sigma(k(a-b))=k(b-a), \qquad  	\sigma(k(b-a))=k(a-b).
\end{equation}
\begin{equation} \label{inv-set}
	\sigma(T'_k) = T'_{n-1-k}, \qquad \sigma(T'_{n-1-k})=T'_k; 
\end{equation}
Clearly, these all hold when $k=0$. Suppose that they hold for some $k < j_0-1$. Then $|T'_k|=2$ and $T'_k=\{k(a-b), (k+1)(a-b)\}$. By (\ref{inv-set}), $\sigma$ swaps $T'_k$ and $T'_{n-1-k}$. Hence $|T'_{n-1-k}|=2$ as well, and $T'_{n-1-k}=\{k(b-a),(k+1)(b-a)\}$. In particular, $n-1-k \neq j_0+1$. Together with (\ref{inv-bound}), this shows that $n-1-k \geq j_0+2$, so that (\ref{inv-bound}) holds for $k+1$. Moreover, it follows from (\ref{inv-ind}) that $\sigma$ swaps $(k+1)(a-b)$ and $(k+1)(b-a)$, so (\ref{inv-ind}) holds for $k+1$. Finally, since 
$$  (k+1)(a-b) \in T'_j \LRA j=k \mbox{ or } j=k+1 $$
and
$$  (k+1)(b-a) \in T'_j \LRA j=n-1-k \mbox{ or } j= n-2-k, $$
it follows that $\sigma$ swaps $T'_{k+1}$ and $T'_{n-2-k}$. This shows that (\ref{inv-set}) holds for $k+1$, completing the induction. Now taking $k=j_0-1$ in (\ref{inv-bound}) and (\ref{inv-set}), we have that $n-j_0 \geq j_0+1$ and that $\sigma$ swaps $T'_{j_0-1}$ and $T'_{n-j_0}$. Thus $|T'_{n-j_0}|=|T'_{j_0-1}|=1$. Hence $n-j_0=j_0+1$, so that $n$ is odd and $j_0=\frac{1}{2}(n-1)$. As $T_{j_0}=\{j_0(a-b), (j_0+1)(a-b)\}=\{-a,-b\}$, we have $b=-a$. Since (\ref{inv-ind}) holds for $0 \leq k \leq \frac{1}{2}(n-1)$, we now know that $\sigma(c)=-c$ for all $c \in A$ with the possible exceptions of $c=\frac{1}{2}(n-1)(a-b)=-a=b$ and $c=\frac{1}{2}(n+1)(a-b)=-b=a$. Thus either $\sigma=\inv$ or $\sigma=\ttau \circ \inv$, where in this case $\ttau=\tau$. Now $\inv \in \Autgp(A)$ and $\inv$ commutes with $\tau$, so $\inv \in \Autmag(A,\tau)$ by Proposition \ref{autom}. We have already seen in the first case that $\ttau \not \in \Autmag(A,\tau)$. Thus $\ttau \circ \inv \not \in \Autmag(A,\tau)$. 

We have now shown both cases of (\ref{transp-aut}). In particular, $\Autmag(A,\tau) \subseteq \Autgp(A)$, so Conjecture \ref{main-conj} holds for $(A, \tau)$. 
\end{proof}

\section{Submagmas}

Recall from Definition \ref{mag-cat}(iii) that submagmas always contain $0$.

We begin with some easy examples of submagmas.

\begin{lemma}  \label{submags}
	Let $A$ be a finite abelian group, and let $\tau \in \Symo(A)$.
	\begin{itemize}
		\item[(i)] Any submagma of $(A,\tau)$ is a union of orbits of the subgroup $\langle \tau, \ttau\rangle$ of $\Symo(A)$. In particular, if $\langle \tau, \ttau\rangle$ is transitive on $A\bs\{0\}$ then the only submagmas are $\{0\}$ and $A$.
		\item[(ii)] The submagmas of size $2$ are the sets $\{0,a\}$ where $a$ satisfies $\tau(a)=\ttau(a)=a$.
		\item[(iii)] If $B$ is a subgroup of $A$ with $\tau(B)=B$ then $B$ is a submagma of $(A,\tau)$. 
		\item[(iv)] If $\sigma \in \Autmag(A,\tau)$ then the set of fixed points of $\sigma$
		$$   \Fix(\sigma)=\{ a \in A : \sigma(a) =a \} $$
		is a submagma of $(A,\tau)$. 
		\item[(v)] If $\tau \in \Autgp(A)$ and $B$ is any union of $\tau$-orbits containing $0$, then $B$ is a submagma of $(A,\tau)$. 
	\end{itemize}
\end{lemma}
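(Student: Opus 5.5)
We prove the five parts in turn; each is a direct computation from the definition $a \cdot_\tau b = \tau(b-a)-\tau(-a)$, using the two special cases of the operation already exploited in the proof of Proposition \ref{tilde-comm}:
$$ 0 \cdot_\tau b = \tau(b), \qquad a \cdot_\tau a = -\tau(-a) = \ttau(a). $$

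For (i), let $B$ be a submagma. Since $0 \in B$, the first identity gives $\tau(B) \subseteq B$, and the second gives $\ttau(B) \subseteq B$. As $A$ is finite and $\tau,\ttau$ are injective, we get $\tau(B)=B=\ttau(B)$. Hence $B$ is invariant under $\langle \tau,\ttau\rangle$, so it is a union of orbits. The orbit of $0$ is $\{0\}$, so transitivity on $A \bs \{0\}$ leaves only $\{0\}$ and $A$.

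For (ii), if $\{0,a\}$ is a submagma with $a\neq 0$, then by (i) it is invariant under $\tau$ and $\ttau$. Both fix $0$, so they must fix $a$. Conversely, suppose $\tau(a)=\ttau(a)=a$. We check the four products:
\begin{itemize}
\item $0\cdot 0=0$;
\item $0\cdot a=\tau(a)=a$;
\item $a\cdot a=\ttau(a)=a$;
\item $a\cdot 0=\tau(-a)-\tau(-a)=0$.
\end{itemize}
So $\{0,a\}$ is a submagma.

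For (iii), if $B$ is a subgroup with $\tau(B)=B$, then for $a,b\in B$ both $b-a$ and $-a$ lie in $B$. Hence $\tau(b-a)-\tau(-a)\in B$.

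For (iv), $\sigma(0)=0$, and if $\sigma(a)=a$ and $\sigma(b)=b$ then $\sigma(a\cdot b)=\sigma(a)\cdot\sigma(b)=a\cdot b$.

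For (v), if $\tau\in\Autgp(A)$, then $a\cdot_\tau b=\tau(b)-\tau(a)+\tau(a)=\tau(b)$, exactly as in the proof of Proposition \ref{autom}. So a set $B$ containing $0$ is a submagma precisely when $\tau(B)\subseteq B$, which holds for any union of $\tau$-orbits. None of these steps is a real obstacle; the only point needing care is the finiteness argument in (i) that upgrades $\tau(B)\subseteq B$ to equality.
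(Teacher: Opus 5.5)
Your proof is correct and matches the paper's argument part by part. For (i) and (ii) you use $0\cdot_\tau b=\tau(b)$ and $a\cdot_\tau a=\ttau(a)$, and (iii)--(v) are the same direct computations; your explicit finiteness step in (i), which gives closure under $\tau^{-1}$ and $\ttau^{-1}$ and hence under all of $\langle\tau,\ttau\rangle$, is a detail the paper leaves implicit.
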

\begin{proof}
	(i) If $B$ is a submagma of $(A,\tau)$ and $a \in B$ then $B$ contains $0 \cdot a = \tau(a)$ and $a \cdot a = \ttau(a)$.
	
	(ii) If $B=\{0,a\}$ is a submagma of size $2$ then $\tau(a)=0 \cdot a$ and $\ttau(a) = a \cdot a$ are in $A \bs\{0\}$, so $\tau(a)=\ttau(a)=a$. Conversely, if $\tau(a)=\ttau(a)=a$ then
	$$ 0 \cdot 0 = a \cdot 0 = 0, \quad 0 \cdot a = a \cdot a =a, $$
	so $\{0,a\}$ is a submagma.
	
	(iii) This is clear from the definition of $\cdot_\tau$.
	
	(iv) $0 \in \Fix(\sigma)$, and if $a$, $b \in \Fix(\sigma)$ then 
	$$ a \cdot_\tau b = \sigma(a) \cdot_\tau \sigma(b) = \sigma(a \cdot_\tau b), $$
	so $a \cdot_\tau b \in \Fix(\sigma)$. 
	
	(v) If $\tau \in \Autgp(A)$ and $B$ is a union of $\tau$-orbits then for $a$, $b \in B$ we have 
	$$ a \cdot_\tau b = \tau(b-a)-\tau(-a) = \tau(b)-\tau(a)+\tau(a)=\tau(b) \in B. $$
\end{proof}

\begin{remark} \label{no-Lagrange}
There is no obvious analogue of Lagrange's Theorem for pointed magmas. For example, the magma of cardinality $7$ in Example \ref{C6-ex2} has a submagma $\{0,1,2,3,5\}$ of cardinality $5$.
\end{remark}

\begin{definition}
	Let $S \subseteq A$. The {\em submagma of $(A,\tau)$ generated by $S$} is the smallest submagma containing $S$. We say $(A,\tau)$ is {\em cyclic} if it generated by a single element.
\end{definition}

If $(A,\tau)$ has no proper nontrivial submagmas then it is necessarily cyclic: any $a \neq 0$ generates the full magma $A$. 

\begin{remark}
When $A$ has prime order, proper submagmas of size greater than $2$ seem to be rather rare. For $|A|=5$ (respectively $7$, $11$), a computer search shows that $(A,\tau)$ contains a proper submagma of size at least $3$ in only for 8\% (respectively 3\%, 0.001\%) of permutations $\tau \in \Symo(A)$. This includes the cases where $\tau \in \Autgp(A)$, and, in particular, $\tau=\id_A$. 
\end{remark}

A cyclic magma may contain nontrivial proper submagmas, as shown by the next example.

\begin{example} \label{7-submag}
	Let $p=7$ and $\tau=(1,5)$. The operation is shown in Table \ref{has-submagma}. Then $(A,\tau)$ has a submagma $\{0,1,5\}$ of size 3. Since $\tau(3)=3=\ttau(3)$ and $\tau(4)=4=\ttau(4)$, it also has submagmas $\{0,3\}$ and $\{0,4\}$. The magma $(A,\tau)$ is however cyclic, generated by $2$.
\end{example}

\begin{table}[ht]
	\centerline{
		\begin{tabular}{|c|ccccccc|} \hline
			& $0$ & $1$ & $2$ & $3$ & $4$ & $5$ & $6$  \\ \hline
			$0$ & $0$ & $5$ & $2$ & $3$ & $4$ & $1$ & $6$   \\ 
			$1$ & $0$ & $1$ & $6$ & $3$ & $4$ & $5$ & $2$  \\
			$2$ & $0$ & $5$ & $6$ & $4$ & $1$ & $2$ & $3$   \\ 
			$3$ & $0$ & $4$ & $2$ & $3$ & $1$ & $5$ & $6$  \\
			$4$ & $0$ & $1$ & $5$ & $3$ & $4$ & $2$ & $6$   \\ 
			$5$ & $0$ & $1$ & $2$ & $6$ & $4$ & $5$ & $3$  \\ 
			$6$ & $0$ & $4$ & $5$ & $6$ & $3$ & $1$ & $2$  \\ 	\hline
		\end{tabular}
	}  
	\vskip4mm
	\caption{Magma operation for Example \ref{7-submag}} \label{has-submagma}
\end{table}	

If $|A|=p$ is prime, and $p<17$, then a computer search shows that every submagma $(A,\tau)$ with trivial socle is cyclic. This fails for $p=17$.

\begin{example} \label{non-cyclic-mag}
Let $A=C_{17}$ and let 
$$  \tau=(1,4)(2,8)(3,14)(5,12)(6,11)(7,10)(9,15)(13,16).  $$
The magma operation is shown in Table \ref{17-noncyc}. The magma is not cyclic, and in fact every $a \neq 0$ generates a submagma of size $3$. 
\end{example}

\begin{table}[ht]
	\centerline{
		\begin{tabular}{|c|ccccccccccccccccc|} \hline
			& $0$ & $1$ & $2$ & $3$ & $4$ & $5$ & $6$ & $7$ & $8$ & $9$ & $10$ & $11$ & $12$ & $13$ & $14$ & $15$ & $16$  \\ \hline
 $0$ &$0$&$4$&$8$&$14$&$1$&$12$&$11$&$10$&$2$&$15$&$7$&$6$&$5$&$16$&$3$&$9$&$13$ \\
$1$ &$0$&$4$&$8$&$12$&$1$&$5$&$16$&$15$&$14$&$6$&$2$&$11$&$10$&$9$&$3$&$7$&$13$ \\
$2$ &$0$&$4$&$8$&$12$&$16$&$5$&$9$&$3$&$2$&$1$&$10$&$6$&$15$&$14$&$13$&$7$&$11$ \\
$3$ &$0$&$6$&$10$&$14$&$1$&$5$&$11$&$15$&$9$&$8$&$7$&$16$&$12$&$4$&$3$&$2$&$13$ \\
$4$ &$0$&$4$&$10$&$14$&$1$&$5$&$9$&$15$&$2$&$13$&$12$&$11$&$3$&$16$&$8$&$7$&$6$ \\
$5$ &$0$&$11$&$15$&$4$&$8$&$12$&$16$&$3$&$9$&$13$&$7$&$6$&$5$&$14$&$10$&$2$&$1$ \\
$6$ &$0$&$16$&$10$&$14$&$3$&$7$&$11$&$15$&$2$&$8$&$12$&$6$&$5$&$4$&$13$&$9$&$1$ \\
$7$ &$0$&$16$&$15$&$9$&$13$&$2$&$6$&$10$&$14$&$1$&$7$&$11$&$5$&$4$&$3$&$12$&$8$ \\
$8$ &$0$&$9$&$8$&$7$&$1$&$5$&$11$&$15$&$2$&$6$&$10$&$16$&$3$&$14$&$13$&$12$&$4$ \\
$9$ &$0$&$13$&$5$&$4$&$3$&$14$&$1$&$7$&$11$&$15$&$2$&$6$&$12$&$16$&$10$&$9$&$8$ \\
$10$ &$0$&$9$&$5$&$14$&$13$&$12$&$6$&$10$&$16$&$3$&$7$&$11$&$15$&$4$&$8$&$2$&$1$ \\
$11$ &$0$&$16$&$8$&$4$&$13$&$12$&$11$&$5$&$9$&$15$&$2$&$6$&$10$&$14$&$3$&$7$&$1$ \\
$12$ &$0$&$16$&$15$&$7$&$3$&$12$&$11$&$10$&$4$&$8$&$14$&$1$&$5$&$9$&$13$&$2$&$6$ \\
$13$ &$0$&$11$&$10$&$9$&$1$&$14$&$6$&$5$&$4$&$15$&$2$&$8$&$12$&$16$&$3$&$7$&$13$ \\
$14$ &$0$&$4$&$15$&$14$&$13$&$5$&$1$&$10$&$9$&$8$&$2$&$6$&$12$&$16$&$3$&$7$&$11$ \\
$15$ &$0$&$6$&$10$&$4$&$3$&$2$&$11$&$7$&$16$&$15$&$14$&$8$&$12$&$1$&$5$&$9$&$13$ \\
$16$ &$0$&$4$&$10$&$14$&$8$&$7$&$6$&$5$&$11$&$3$&$2$&$1$&$12$&$16$&$5$&$9$&$13$ 	\\ \hline
\end{tabular}
}  
\vskip4mm
\caption{Magma operation for Example \ref{non-cyclic-mag}} \label{17-noncyc}
\end{table}	


\section{Cycle Structure}
In this section, we give several results on the decomposition of $\tau$ and $\sigma$ into disjoint cycles when $\sigma \in \Autmag(A,\tau)$. 

We will make frequent use of the following well-known result (see for example \cite[Proposition 1.1.1]{Sagan}.

\begin{proposition} \label{perm-comm}
Let $\pi$ be an element of the symmetric group $\Sym(n)$ on $n$ elements. For $1 \leq i \leq n$, let $m_i$ be the number of cycles of length $i$ in the expression for $\pi$ as a product of disjoint cycles. Then the centraliser of $\pi$ in $\Sym(n)$ is $Z_1 \times \cdots Z_n$, where $Z_i$ is generated by the $m_i$ cycles of length $i$ and the symmetric group $\Sym(m_i)$ permuting these cycles. Moreover $|Z_i| = i^{m_i} m_i!$ and $Z_i$ is isomorphic to the wreath product $C_i \wr \Sym(m_i)$. 
\end{proposition}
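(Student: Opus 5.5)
The plan is to show that the centraliser $C(\pi)$ equals $Z_1 \times \cdots \times Z_n$, which we get by proving both inclusions and then computing $|Z_i|$. The organising observation is this: for any $\rho \in \Sym(n)$, the conjugate $\rho\pi\rho^{-1}$ is obtained from the cycle decomposition of $\pi$ by replacing each entry $x$ with $\rho(x)$. Hence $\rho$ commutes with $\pi$ if and only if $\rho$ maps each cycle $(x_1,\dots,x_i)$ of $\pi$ onto a cycle $(\rho(x_1),\dots,\rho(x_i))$ of $\pi$, preserving the cyclic order. We first establish this criterion directly from the relation $\rho\pi(x)=\pi\rho(x)$.

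\emph{Step 1 (centralisers permute cycles of equal length).} Let $\rho$ commute with $\pi$. Then $\rho(\pi^k(x)) = \pi^k(\rho(x))$ for all $k$. So $\rho$ carries the $\langle\pi\rangle$-orbit of $x$ bijectively onto the orbit of $\rho(x)$, and both orbits have the same length. It follows that $\rho$ preserves the set $X_i$, the union of the $m_i$ cycles of length $i$. Therefore $\rho$ decomposes as a product of its restrictions $\rho_i$ to the $X_i$, and each $\rho_i$ commutes with $\pi|_{X_i}$.

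\emph{Step 2 (structure on $X_i$).} Label the $i$-cycles of $\pi$ as $c_1,\dots,c_{m_i}$, and fix a base point $x_j$ in each $c_j$. A permutation $\rho_i$ of $X_i$ commuting with $\pi$ is determined by two pieces of data:
\begin{itemize}
\item[(a)] the permutation $\theta \in \Sym(m_i)$ with $\rho_i(c_j)=c_{\theta(j)}$;
\item[(b)] exponents $e_j \in \Z/i\Z$ with $\rho_i(x_j)=\pi^{e_j}(x_{\theta(j)})$.
\end{itemize}
Once these are fixed, $\rho_i(\pi^k x_j)=\pi^{k+e_j}x_{\theta(j)}$ is forced. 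Conversely, every choice of $(\theta, e_1,\dots,e_{m_i})$ defines, via that formula, a well-defined permutation commuting with $\pi$. Well-definedness holds because $\pi^i$ fixes $c_j$ pointwise.

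This parametrisation does three things. It shows the centraliser of $\pi|_{X_i}$ is generated by two kinds of elements: the cycles $c_j$ themselves (realising $e_j$ shifts with $\theta=\id$), and the "block" permutations $x_j$-based relabellings realising $\Sym(m_i)$ with all $e_j=0$. It gives the count $i^{m_i} m_i!$. It also exhibits the semidirect product $(C_i)^{m_i} \rtimes \Sym(m_i)$, in which $\Sym(m_i)$ acts by permuting coordinates; this is the wreath product $C_i \wr \Sym(m_i)$.

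\emph{Step 3 (assembly).} Distinct $X_i$ are disjoint, so the $Z_i$ commute and intersect trivially. Steps 1 and 2 then give $C(\pi)=Z_1\times\cdots\times Z_n$. The only delicate point is verifying the wreath-product multiplication rule, namely that conjugating the shift tuple by a block permutation permutes coordinates. This is a routine check using the base-point formula, so I expect no real obstacle. Alternatively, the cardinality can be confirmed independently by orbit–stabiliser: $|C(\pi)| = n!/|\text{class of }\pi| = \prod_i i^{m_i} m_i!$.
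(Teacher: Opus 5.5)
The paper gives no proof of this proposition. It quotes it as a well-known fact with a reference to Sagan's book, so there is no argument in the paper to compare yours with.

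Your proof is correct and is the standard argument made rigorous. Step~1 reduces everything to the union $X_i$ of the $i$-cycles. Your base-point parametrisation $\rho_i \leftrightarrow (\theta, e_1, \dots, e_{m_i})$ is then a bijection between the centraliser of $\pi|_{X_i}$ and the set $\Sym(m_i) \times (\Z/i\Z)^{m_i}$, which gives $|Z_i| = i^{m_i} m_i!$ at once.

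The ``routine check'' you defer does go through. Let $\beta_\theta$ denote the element with parameters $(\theta, 0, \dots, 0)$, so that $\beta_\theta(\pi^k x_j) = \pi^k x_{\theta(j)}$. Then $\rho_i = \beta_\theta\, c_1^{e_1} \cdots c_{m_i}^{e_{m_i}}$ and $\beta_\theta c_j \beta_\theta^{-1} = c_{\theta(j)}$. Moreover, $\rho_i \mapsto \theta$ is a homomorphism onto $\Sym(m_i)$ with kernel $\langle c_1, \dots, c_{m_i} \rangle \cong C_i^{m_i}$, complemented by the subgroup $\{\beta_\theta\}$. This is exactly $C_i \wr \Sym(m_i)$.

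Your construction also pins down something the statement leaves implicit. ``The symmetric group $\Sym(m_i)$ permuting these cycles'' depends on the choice of base points, and different choices give complements that are conjugate under $\langle c_1, \dots, c_{m_i} \rangle$.

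The closing orbit--stabiliser remark adds little. The class-size formula it relies on is proved by the same count: each permutation of the given type arises from exactly $\prod_i i^{m_i} m_i!$ fillings of the cycle template. You can drop it.
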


\subsection{$\tau$ of order divisible by a large prime}

\begin{lemma} \label{long-cycle}
Let $A$ be a finite abelian group of odd order $n \geq 5$, and let $\tau$, $\sigma \in \Symo(A)$ satisfy (\ref{st-conds}). Suppose that the order of $\tau$ is divisible by a prime $q  \geq (n+1)/2$.  Then $\sigma$ is supported on a set of cardinality at most $n-2-q$.
\end{lemma}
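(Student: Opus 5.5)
The plan is to locate the unique long cycle of $\tau$, show that $\sigma$ fixes it pointwise, and then find one more fixed point of $\sigma$ from the submagma $\Fix(\sigma)$. The argument only uses $\sigma \in \Autmag(A,\tau)$ together with reciprocity; the socle conditions in (\ref{st-conds}) should not be needed.

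\emph{Step 1: the structure of $\tau$.} Since $\tau(0)=0$, $\tau$ permutes the $n-1$ nonzero elements. As $q$ divides the order of $\tau$, some cycle of $\tau$ has length divisible by $q$. Because $2q \geq n+1 > n-1$, that cycle has length exactly $q$, and it is the only cycle of length divisible by $q$. Call its support $C$, so $|C|=q$, and put $D = A \bs (C \cup \{0\})$. Then $|D| = n-1-q < q$. Since $n \geq 5$ we have $q \geq 3$, so $q$ is odd and $|D|$ is odd as well. The key parity fact is that no nonempty subset $X$ of $A\bs\{0\}$ with $X=-X$ can have odd size, because $n$ odd gives $x \neq -x$ for $x \neq 0$. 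In particular $-C \neq C$.

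\emph{Step 2: $\sigma$ fixes $C$ pointwise.} By Proposition \ref{tilde-comm}, $\sigma$ commutes with $\tau$. By Proposition \ref{perm-comm}, $\sigma$ then preserves $C$ and acts on it as a power of the $q$-cycle $\tau|_C$, and it preserves $D$. Let $k$ be the order of $\sigma|_D$. Every cycle length of $\sigma|_D$ is at most $|D|<q$, so $q \nmid k$. Suppose, for contradiction, that $\sigma|_C \neq \id$. Then $\pi = \sigma^k$ is a nontrivial power of the $q$-cycle on $C$, hence itself a $q$-cycle with support $C$, and $\pi$ is the identity on $D$. Also $\pi \in \Autmag(A,\tau)$, since this is a group.

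By Proposition \ref{recip}, $\tau \in \Autmag(A,\pi)$. Applying Proposition \ref{tilde-comm} to this pair, $\tau$ commutes with $\tpi$, where $\tpi(a) = -\pi(-a)$. Thus $\tau$ preserves $\Supp(\tpi) = -C$, so $-C$ is a union of $\tau$-cycles of total size $q$. Since $q > |D|$, the set $-C$ meets $C$, and therefore contains $C$; comparing sizes gives $-C = C$. This contradicts Step 1, so $\sigma|_C = \id$.

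\emph{Step 3: one extra fixed point.} By Lemma \ref{submags}(iv), $\Fix(\sigma)$ is a submagma of $(A,\tau)$. By Lemma \ref{submags}(i), it is therefore closed under $\ttau$. It contains $0$ and $C$ by Step 2, so it also contains $\ttau(C)$. Now $\ttau = \inv\circ\tau\circ\inv$ has a unique $q$-cycle, namely on $-C$. If $\ttau(C)$ were equal to $C$, then $C$ would be a $\ttau$-invariant set of size $q$; since $q>|D|$ it would contain the $q$-cycle of $\ttau$, giving $C=-C$, which is impossible. Hence $\ttau(C) \not\subseteq C$, and $\sigma$ fixes at least one point of $D$. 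Therefore $|\Supp(\sigma)| \leq |D|-1 = n-2-q$.

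The step I expect to be the main obstacle is Step 2. The useful move there is to pass to the power $\pi = \sigma^k$, which isolates the $q$-cycle, and then to apply reciprocity so that Proposition \ref{tilde-comm} can be used with $\tau$ and $\pi$ in swapped roles. This is what forces the contradiction $-C=C$ against the oddness of $n$.
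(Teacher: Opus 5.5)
Your proof is correct and is essentially the paper's argument. Both use only that $\sigma$ commutes with $\tau$ and $\ttau$, the centraliser structure around the unique $q$-cycle on $C$, and the parity fact $-C\neq C$, to show that $\sigma$ fixes $C$ pointwise together with at least one further point (the paper obtains it from $\Supp(\tzeta)=-C$, you from $\ttau(C)$). The passage to $\pi=\sigma^k$ and Proposition~\ref{recip} is harmless but unnecessary: $\tau\tpi=\tpi\tau$ is equivalent to $\pi\ttau=\ttau\pi$, which already follows from Proposition~\ref{tilde-comm} for the original pair, and closure of $\Fix(\sigma)$ under $\ttau$ is just $\sigma\ttau=\ttau\sigma$.
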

\begin{proof}
Clearly $q$ is odd and $\tau$ contains exactly one cycle $\zeta$ of length $q$. Let $X=A \backslash (\Supp(\zeta) \cup \{0\})$, a set of cardinality $n-1-q$. Then $\tau = \zeta \pi$, where $\pi$ is a permutation with $\Supp(\pi) \subseteq X$. Now $\sigma$ commutes with $\tau$, and it follows from Proposition \ref{perm-comm} that $\sigma = \zeta^a \rho$ for some $a$ with $0 \leq a \leq q-1$ and some permutation $\rho$ with $\Supp(\rho) \subseteq X$. Thus $\rho$ cannot contain a cycle of length $q$. But $\sigma$ also commutes with $\ttau=\tzeta \tpi$, and $\Supp(\tzeta) \neq \Supp(\zeta)$ as $q$ is odd. Hence $a=0$, and $\rho$ must fix $\Supp(\zeta) \cup \Supp(\tzeta)$, a set of cardinality at least $q+1$. It is therefore supported on a set of cardinality at most $n-2-q$.
\end{proof}

\subsection{The cyclic prime-power case}

Throughout this section, let $A=C_{p^d}$ be a cyclic group of order $p^d$, with $p$ prime and $d \geq 1$. In this case, $A$ has a unique minimal (nontrivial) subgroup $M$, namely the subgroup of order $p$. Thus if $\Soc(A,\tau) \neq \{0\}$ then $M \subseteq \Soc(A,\tau)$. We therefore have
\begin{equation}  \label{M-Soc}
 \Soc(A,\tau) \neq \{0\} \LRA \tau(a+b)=\tau(a)+\tau(b) \mbox{ for all } a \in M, b \in A. 
\end{equation}
This allows us to improve on Proposition \ref{Soc-k-prop}.

\begin{proposition}  \label{compose-soc}
If $\rho$, $\tau \in \Symo(A)$ with $\Soc(A,\rho) \neq \{0\} \neq \Soc(A,\tau)$ then $\Soc(A,\rho \tau) \neq \{0\}$.
\end{proposition}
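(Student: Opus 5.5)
By (\ref{M-Soc}), it suffices to show that $\rho\tau(a+b)=\rho\tau(a)+\rho\tau(b)$ for all $a \in M$ and $b \in A$. Since $\Soc(A,\tau)\neq\{0\}$ and $M$ is the unique minimal subgroup of $A$, we have $M \subseteq \Soc(A,\tau)$. Hence $\tau(a+b)=\tau(a)+\tau(b)$ for all $a\in M$, $b\in A$. The same argument gives $M \subseteq \Soc(A,\rho)$, so $\rho(c+d)=\rho(c)+\rho(d)$ for all $c \in M$, $d \in A$.

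The key step is to check that $\tau(M)=M$. The restriction of $\tau$ to $\Soc(A,\tau)$ is an injective group homomorphism. Its restriction to $M$ is therefore an injective homomorphism $M \to A$, whose image is a subgroup of order $p$. Since $A$ is cyclic, $M$ is its only subgroup of order $p$, so $\tau(M)=M$. This is the same argument used in the proof of Proposition \ref{Soc-k-prop}.

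Now let $a\in M$ and $b\in A$. Then
$$\rho\tau(a+b)=\rho(\tau(a)+\tau(b))=\rho(\tau(a))+\rho(\tau(b)),$$
where the second equality uses $\tau(a)\in M\subseteq\Soc(A,\rho)$. By (\ref{M-Soc}), $\Soc(A,\rho\tau)\neq\{0\}$ (in fact $M\subseteq\Soc(A,\rho\tau)$). No step looks like a real obstacle; the only point needing care is that $\tau$ maps $M$ into $\Soc(A,\rho)$, which is exactly what the invariance $\tau(M)=M$ supplies.
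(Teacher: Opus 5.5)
Your proof is correct and is essentially the paper's argument: you show $M \subseteq \Soc(A,\rho\tau)$ via $\rho(\tau(a+b))=\rho(\tau(a)+\tau(b))=\rho(\tau(a))+\rho(\tau(b))$ for $a \in M$, $b \in A$. Your explicit check that $\tau(M)=M$, which the second equality needs, supplies a step that the paper's proof uses without stating it.
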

\begin{proof}
For any $\sigma \in \Symo(A)$ we have 
$$ \Soc(A,\sigma) \neq \{0\} \LRA M \subseteq \Soc(A,\sigma). $$
In particular, $\rho$ and $\tau$ satisfy these equivalent conditions. Thus, using (\ref{M-Soc}), for $a \in M$ and $b \in A$ we have  
$$ \rho(\tau(a+b))=\rho(\tau(a)+\tau(b))=\rho(\tau(a)) + \rho(\tau(b)), $$
showing that $a \in \Soc(A,\rho \tau)$. Hence $M \subseteq \Soc(A,\rho \tau)$. 
\end{proof}
	
\begin{definition}
We call $\sigma \in \Symo(A)$ {\em special} if it satisfies the following properties:
\begin{itemize}
	\item[(i)] $\sigma$ has order $q^e$ where $q$ is prime and $e \geq 2$; 
	\item[(ii)] $\Soc(A,\sigma^q) \neq \{0\}$.
\end{itemize}
\end{definition}

\begin{proposition} \label{reduce-to-pp}
Let $\tau \in \Symo(A)$ and let $\sigma \in \Autmag(A,\tau)$ with $\Soc(A,\sigma)=\{0\}$. Then there is some power $\sigma^k$ of $\sigma$ such that $\Soc(A,\sigma^k)=\{0\}$ and $\sigma^k$ either has prime order or is a special permutation.
\end{proposition}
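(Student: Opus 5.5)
We use Proposition \ref{compose-soc} in contrapositive form: if $\Soc(A,\rho\tau)=\{0\}$ then one of $\Soc(A,\rho)$, $\Soc(A,\tau)$ is trivial. Combined with Proposition \ref{Soc-k-prop}, which for cyclic $A$ says that the socle can only grow when passing to powers and is unchanged by powers coprime to the order, this lets us move down the lattice of powers of $\sigma$ until we reach one of the two target shapes. The hypothesis $\sigma \in \Autmag(A,\tau)$ plays no role; only $\Soc(A,\sigma)=\{0\}$ is needed.

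\emph{Step 1: reduce to prime-power order.} Let $\sigma$ have order $m = q_1^{e_1}\cdots q_r^{e_r}$ and put $m_i = m/q_i^{e_i}$. Since the $m_i$ are coprime, choose integers $c_i \geq 1$ with $\sum_i c_i m_i \equiv 1 \pmod m$. Then $\sigma = \prod_i \sigma^{c_i m_i}$, where the factors commute and each is a power of $\sigma^{m_i}$. Suppose every $\Soc(A,\sigma^{m_i})$ were nontrivial. By (\ref{Soc-k}) each $\Soc(A,\sigma^{c_i m_i})$ would be nontrivial, and iterating Proposition \ref{compose-soc} would give $\Soc(A,\sigma)\neq\{0\}$, a contradiction. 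So some $\rho=\sigma^{m_i}$, of order $q^{e}$ with $q=q_i$ and $e = e_i \geq 1$, has $\Soc(A,\rho)=\{0\}$. If $m=1$, then $\sigma=\id$ and $\Soc(A,\sigma)=A$, so this case cannot occur.

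\emph{Step 2: descend within the $q$-power chain.} Consider the chain $\rho, \rho^q, \rho^{q^2}, \dots, \rho^{q^{e}}=\id$. Its first term has trivial socle and its last term has socle $A$. By (\ref{Soc-k}) the socles increase along the chain, so there is a largest $j$ with $0\le j\le e-1$ such that $\Soc(A,\rho^{q^j})=\{0\}$. Set $\sigma^k=\rho^{q^j}$; it has order $q^{e-j}$. If $e-j=1$, then $\sigma^k$ has prime order. Otherwise $e-j\geq2$, and by maximality of $j$ we have $\Soc(A,(\sigma^k)^q)=\Soc(A,\rho^{q^{j+1}})\neq\{0\}$, so $\sigma^k$ is special. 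In both cases $\Soc(A,\sigma^k)=\{0\}$.

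\emph{Main obstacle.} The only delicate point is Step 1: writing $\sigma$ as a product of powers of the $\sigma^{m_i}$ and making sure Proposition \ref{compose-soc} applies. That proposition needs both factors to have nontrivial socle, and its conclusion then feeds the next product. Proposition \ref{Soc-k-prop} requires exponents $k\ge 1$, which is why we choose each $c_i$ positive by reducing modulo $m$. Once the factors and exponents are set up this way, the induction on the number of factors is routine.
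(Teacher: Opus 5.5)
Your proof is correct and is essentially the paper's argument: write $\sigma$ as a product of commuting powers of prime-power order, use Proposition~\ref{compose-soc} (valid under the section's standing assumption $A=C_{p^d}$) to find one of them, of order $q^e$, with trivial socle, and then descend along its $q$-power chain. The differences are cosmetic: you take $m/q_i^{e_i}$ with positive coefficients $c_i$ and invoke (\ref{Soc-k}), whereas the paper uses CRT idempotents $m_i$ so that $\sigma=\sigma_1\cdots\sigma_r$ directly; and you take the largest $j$ with $\Soc(A,\rho^{q^j})=\{0\}$, whereas the paper takes the least $h$ with $\Soc(A,\sigma_i^{q^h})\neq\{0\}$.
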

\begin{proof} 
Let $\sigma$ have order $m$ with prime factorisation $m=q_1^{e_1} \cdots q_r^{e_r}$. For $1 \leq i \leq r$, let $m_i \in \Z$ satisfy  $m_i \equiv 1 \pmod{q_i^{e_i}}$ and $m_i \equiv 0 \pmod{q_j^{e_j}}$ for $j \neq i$. Set $\sigma_i = \sigma^{m_i} \in \Autmag(A,\tau)$. Then $\sigma_i$ has order $q_e^{e_i}$ and $\sigma=\sigma_1 \cdots \sigma_r$. If $\Soc(A,\sigma_i) \neq \{0\}$ for all $i$ then, applying Proposition \ref{compose-soc} repeatedly, we have $\Soc(A,\sigma) \neq \{0\}$, contrary to our hypotheses. Thus, for some $i$, we have $\Soc(A,\sigma_i)=\{0\}$. To simplify notation, write $q=q_i$, $e=e_i$, so $\sigma_i$ has order $q^e$. If $\Soc(A,\sigma_i^{q^{e-1}})=\{0\}$ then take $k=m_i q^{e-1}$ so that $\sigma^k$ has prime order $q$. Otherwise let $h$ be minimal such that $\Soc(A,\sigma_i^{q^h})\neq \{0\}$. Then $1 \leq h <e$ and, taking $k=m_i q^{h-1}$, we have that $\sigma^k$ is a special permutation of order $q^{e-h+1}$. 
\end{proof}

\begin{proposition} \label{prime-special}
If Conjecture \ref{main-conj} does not hold for $A$, then there are $\tau$, $\sigma$ satisfying (\ref{st-conds}) such that each of $\tau$, $\sigma$ either has prime order or is a special permutation.
\end{proposition}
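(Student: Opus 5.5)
Start from a counterexample and apply Proposition \ref{reduce-to-pp} twice, once in each variable, using the reciprocity of Proposition \ref{recip} to pass between them. Throughout, $A=C_{p^d}$ as in this subsection, so Proposition \ref{reduce-to-pp} is available.

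Suppose Conjecture \ref{main-conj} fails for $A$. Then there exist $\tau_0$, $\sigma_0 \in \Symo(A)$ satisfying (\ref{st-conds}): $\sigma_0 \in \Autmag(A,\tau_0)$ and $\Soc(A,\sigma_0)=\Soc(A,\tau_0)=\{0\}$.

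\textbf{Step 1 (reduce $\sigma$).} Apply Proposition \ref{reduce-to-pp} with $\tau=\tau_0$ and $\sigma=\sigma_0$. This gives $k$ such that $\sigma:=\sigma_0^k$ has $\Soc(A,\sigma)=\{0\}$ and $\sigma$ either has prime order or is special. Since $\Autmag(A,\tau_0)$ is a group, $\sigma \in \Autmag(A,\tau_0)$. Thus $(\tau_0,\sigma)$ still satisfies (\ref{st-conds}).

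\textbf{Step 2 (reduce $\tau$).} By Proposition \ref{recip}, $\tau_0 \in \Autmag(A,\sigma)$, and $\Soc(A,\tau_0)=\{0\}$. Apply Proposition \ref{reduce-to-pp} again with the roles swapped, taking $\sigma$ as the base permutation and $\tau_0$ as the automorphism. This gives $l$ such that $\tau:=\tau_0^l$ has $\Soc(A,\tau)=\{0\}$ and $\tau$ has prime order or is special. Again $\tau \in \Autmag(A,\sigma)$, since $\Autmag(A,\sigma)$ is a group.

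\textbf{Step 3 (conclude).} A final use of Proposition \ref{recip} gives $\sigma \in \Autmag(A,\tau)$. Together with $\Soc(A,\sigma)=\Soc(A,\tau)=\{0\}$, the pair $(\tau,\sigma)$ satisfies (\ref{st-conds}). Each of $\tau$, $\sigma$ has prime order or is special, which is the claim.

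There is no serious obstacle. The only points to check are that Step 2 does not undo Step 1, and it does not, because the second application changes only $\tau$ and leaves $\sigma$ fixed. The other check is that Proposition \ref{reduce-to-pp} allows an arbitrary base permutation. It does: its only hypotheses are $\sigma \in \Autmag(A,\tau)$ and trivial socle of $\sigma$, and no condition is placed on $\tau$.
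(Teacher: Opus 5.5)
Your proof is correct and follows the paper's argument. The paper also reduces $\sigma$ via Proposition \ref{reduce-to-pp}, passes to $\tau \in \Autmag(A,\sigma)$ by Proposition \ref{recip}, and then reduces $\tau$ the same way. Your final appeal to reciprocity to recover $\sigma \in \Autmag(A,\tau)$ makes explicit a step the paper leaves implicit.
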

\begin{proof}
As Conjecture \ref{main-conj} does not hold for $A$, there are $\sigma$, $\tau$ satistifying (\ref{st-conds}).
By Proposition \ref{reduce-to-pp}, we may replace $\sigma$ by a suitable power $\sigma^k$ so that $\sigma$ either has prime order or is a special permutation. We still have $\sigma \in \Autmag(A,\tau)$, so, by Proposition \ref{recip}, also $\tau \in \Autmag(A,\sigma)$. Applying Proposition \ref{reduce-to-pp} again (with $\sigma$ and $\tau$ interchanged), we can replace $\tau$ by a suitable power $\tau^j$ so that also either $\tau$ has prime order or $\tau$ is a special permutation.
\end{proof}

\begin{corollary} \label{special_or_smallprime}
Let $A$ be cyclic of order $p^d \geq 5$, with $p \geq 3$. If Conjecture \ref{main-conj} does not hold for $A$ then there is a counterexample $\tau$ for $A$ which either has prime order $q \leq (p^d-1)/2$ or is a special permutation. 	
\end{corollary}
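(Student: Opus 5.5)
By Proposition \ref{prime-special}, if Conjecture \ref{main-conj} fails for $A$ then there are $\tau$, $\sigma$ satisfying (\ref{st-conds}) such that each of $\tau$ and $\sigma$ either has prime order or is a special permutation. If $\tau$ is special we are done, so assume $\tau$ has prime order $q$. It then suffices to rule out $q \geq (p^d+1)/2$, or else to replace $\tau$ by another counterexample of the required type. Note that $q \leq n-1$ where $n=p^d$, since $\tau$ fixes $0$. Since $n$ is odd, $q \neq (n-1)/2$ would only need care if $q=(n-1)/2$, but that case already satisfies the bound.

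Suppose $q \geq (n+1)/2$. Here $n \geq 5$ is odd, so Lemma \ref{long-cycle} applies and shows that $\sigma$ is supported on a set of cardinality at most $n-2-q \leq (n-5)/2$. I would then use the symmetry of the situation. By Proposition \ref{recip}, $\tau \in \Autmag(A,\sigma)$, so $\sigma$ is itself a counterexample for $A$ (its socle is trivial and $\tau$ witnesses it). Moreover $\sigma$ is either special or of prime order $r$.

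If $\sigma$ is special, take $\sigma$ as the required counterexample. If $\sigma$ has prime order $r$, then $\sigma \neq \id$, since $\Soc(A,\id)=A \neq \{0\}$. Every cycle of $\sigma$ has length $r$ and lies in its support, so $r \leq |\Supp(\sigma)| \leq (n-5)/2 < (n-1)/2$. Thus $\sigma$ is a counterexample of prime order $r \leq (p^d-1)/2$, as required. The hypothesis $p \geq 3$ is used only to ensure that $n$ is odd, so that Lemma \ref{long-cycle} applies, and $p^d \geq 5$ supplies the condition $n \geq 5$.

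The main point to check is that $\sigma$ really is a counterexample in the sense of Definition \ref{def-c-ex}. This follows from (\ref{st-conds}), which is symmetric in $\sigma$ and $\tau$ once Proposition \ref{recip} is applied. One must also check that the reduction in Proposition \ref{prime-special} preserves (\ref{st-conds}), and this holds by its construction. I expect no serious obstacle beyond this bookkeeping.
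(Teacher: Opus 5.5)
Your proposal is correct and follows the paper's proof: take the pair from Proposition \ref{prime-special}, and if $\tau$ has prime order $q \geq (p^d+1)/2$, use Lemma \ref{long-cycle} together with Proposition \ref{recip} to make $\sigma$ the required counterexample. Your explicit bound $r \leq |\Supp(\sigma)| \leq (n-5)/2$ (using $\sigma \neq \id$) spells out a step the paper leaves implicit, and your muddled sentence about $q=(n-1)/2$ should simply say that, since $n$ is odd, either $q \leq (n-1)/2$ or $q \geq (n+1)/2$.
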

 \begin{proof}
 Let $\sigma$, $\tau$ be as in Proposition \ref{prime-special}. Then $\tau$ is as required unless $\tau$ has prime order $q \geq (p^d+1)/2$. In that case, $\sigma$ is either a special permutation or has prime order $r \leq (p^d-1)/2$ by Lemma \ref{long-cycle}, and $\tau \in \Autmag(A,\sigma)$ by Proposition \ref{recip}. Thus $\sigma$ is a counterexample with the required properties.	
 \end{proof}
 
 We now describe the special permutations in the case that $A$ has prime order.
 
 \begin{proposition} \label{special-p}
 Let $A$ be a cyclic group of order $p$ and let $\sigma \in \Symo(A)$ be a special permutation of order $q^e$ with $q$ prime and $e \geq 2$. Then $\sigma^q \in \Autgp(A)$, $q^e$ divides $p-1$, and $\sigma$ consists of $(p-1)/q^e$ cycles of length $q^e$.
 \end{proposition}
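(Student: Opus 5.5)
Since $|A|=p$ is prime, every subgroup of $A$ is either $\{0\}$ or $A$. By property (ii) of a special permutation, $\Soc(A,\sigma^q)\neq\{0\}$, so $\Soc(A,\sigma^q)=A$. As noted after (\ref{def-soc}), this says exactly that $\sigma^q\in\Autgp(A)$, which is the first assertion.

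For the cycle structure, set $\beta=\sigma^q$. It has order $q^{e-1}>1$ because $e\geq 2$. Identify $A$ with $\Z/p\Z$. Then $\Autgp(A)\cong(\Z/p\Z)^\times$, and $\beta$ is multiplication by some unit $u$ of multiplicative order $q^{e-1}$.

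The key step is that a nonidentity automorphism acts semiregularly on $A\setminus\{0\}$. If $u^j x=x$ with $x\neq 0$, then $u^j=1$. So every nonzero element has $\beta$-orbit of length exactly $q^{e-1}$.

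Now take a cycle of $\sigma$ through some $x\neq 0$, and let its length be $\ell$. Then $\ell$ divides $q^e$, so $\ell=q^f$ for some $f$. The $\beta$-orbit of $x$ has length $\ell/\gcd(\ell,q)$, and this must equal $q^{e-1}$. Hence $q^f=q^e$, since $f\leq e-1$ would give an orbit length of at most $q^{e-2}$.

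So every cycle of $\sigma$ on $A\setminus\{0\}$ has length $q^e$. There are therefore $(p-1)/q^e$ such cycles, and in particular $q^e\mid p-1$.

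The only step needing care is the semiregularity of $\beta$ and the relation between the cycle length of $\sigma$ and that of $\sigma^q$. Both are elementary, so I expect no real obstacle.
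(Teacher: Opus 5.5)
Your proof is correct and follows essentially the same route as the paper. Primality forces $\Soc(A,\sigma^q)=A$, so $\sigma^q$ is a group automorphism of order $q^{e-1}>1$ acting freely on $A\setminus\{0\}$, and hence every nonzero cycle of $\sigma$ has length $q^e$; you simply spell out the semiregularity and cycle-splitting steps that the paper leaves implicit.
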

 \begin{proof}
 Since $\Soc(A,\sigma^q) \neq \{0\}$, we have $\Soc(A,\sigma^q)=A$, so $\sigma^q$ is an automorphism of $A$ of order $q^{e-1}>1$. Thus $\sigma^q$ has no fixed points except $0$ and consists of $(p-1)/q^{e-1}$ cycles of length $q^{e-1}$. It follows that $\sigma$ must consist of $(p-1)/q^e$ cycles of length $q^e$. In particular, $q^e$ divides $p-1$. 
 \end{proof}

\subsection{Ad hoc arguments for $|A|=17$ or $19$}

\begin{lemma} \label{ad hoc}
Let $A$ be the cyclic group of order $p=17$ or $19$. Suppose that $A$ does not satisfy Conjecture \ref{main-conj}. Then there is a counterexample $\tau$ for $A$ satisfying one of the following conditions:
\begin{enumerate}
	\item[(i)] $\tau$ is a special permutation;
	\item[(ii)] $\tau$ has order $2$ or $3$;
	\item[(iii)] $\tau$ is a single cycle of length $q=5$ or $7$;
	\item[(iv)] $\tau$ contains a cycle $\zeta$ of length $q=5$ or $7$ for which $\Supp(\zeta)$ and $\Supp(\tzeta)$ are disjoint, and $\tau = \zeta \tzeta^h$ with $1 \leq h \leq q-1$. 
\end{enumerate}
\end{lemma}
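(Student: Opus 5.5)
The starting point is Corollary \ref{special_or_smallprime} with $p^d=p\in\{17,19\}$. If Conjecture \ref{main-conj} fails for $A$, it gives a counterexample $\tau$ which is either special, giving (i), or has prime order $q\le (p-1)/2$. In the second case $q\le 8$ for $p=17$ and $q\le 9$ for $p=19$, so $q\in\{2,3,5,7\}$. The cases $q=2,3$ are exactly (ii). So the whole work is in $q\in\{5,7\}$, where $\tau$ is a product of $m$ disjoint $q$-cycles $\zeta_1,\dots,\zeta_m$ on the $p-1$ nonzero elements. Hence $m\le 3$ for $q=5$ and $m\le 2$ for $q=7$. If $m=1$ we are in (iii), so assume $m\ge 2$.

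Throughout, I fix $\sigma$ with $\tau,\sigma$ satisfying (\ref{st-conds}). Proposition \ref{tilde-comm} says $\sigma$ commutes with both $\tau$ and $\ttau$, and $\ttau=\tzeta_1\cdots\tzeta_m$ with $\Supp(\tzeta_i)=-\Supp(\zeta_i)$. Since $q$ is odd and negation has no nonzero fixed points in $A$, no set of odd size $q$ avoiding $0$ is closed under negation. Thus $\Supp(\tzeta_i)\neq\Supp(\zeta_i)$ for every $i$, as in the proof of Lemma \ref{long-cycle}.

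By Proposition \ref{perm-comm}, $\sigma$ permutes the $q$-cycles of $\tau$ among themselves, and likewise those of $\ttau$. It acts on each orbit of cycles as a power of a cycle composed with a permutation of the cycles, and it acts arbitrarily on the fixed points. The key dichotomy is the following.

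\begin{itemize}
\item[(a)] For some $i$, $\Supp(\tzeta_i)$ is disjoint from $\Supp(\tau)$, or meets it without coinciding with some $\Supp(\zeta_j)$. Then the simultaneous commutation constraints force $\sigma$ to act trivially, or with very restricted form, on a large set. I would then use Lemma \ref{submags}(iv): $\Fix(\sigma)$ is a submagma, and $\sigma$ is pinned down on the few remaining points, much as in Lemma \ref{long-cycle}. Via Proposition \ref{recip}, this either contradicts $\Soc(A,\sigma)=\{0\}$ or lets me exchange the roles of $\tau$ and $\sigma$. The new counterexample $\sigma$, after passing to a suitable power by Proposition \ref{reduce-to-pp}, is then special, of order $2$ or $3$, or a single $q$-cycle.
\item[(b)] For the cycle $\zeta=\zeta_1$, the set $-\Supp(\zeta)$ is exactly the support of another cycle $\zeta_2$ of $\tau$. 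Here I would use that $\sigma$ commutes with $\tau$ and $\ttau$ on $\Supp(\zeta)\cup\Supp(\tzeta)$. Then compare the magma equation (\ref{mag-aut-eqn}) at $a$ in one support and $b$ in the other to show $\zeta_2=\tzeta^h$ for some $1\le h\le q-1$. Any third $5$-cycle (possible only for $q=5$, $m=3$) would have to be handled in the same way; but its support is again not closed under negation, and a parity/counting argument on the remaining $p-1-2q$ points should exclude it or reduce to case (a). This yields (iv).
\end{itemize}

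Corollaries \ref{conj-orbits} and \ref{cyclic-powers} are available throughout to normalise $\tau$ up to conjugation by $\Autgp(A)$ and up to coprime powers.

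The main obstacle is case (b), and especially showing that the cycle of $\tau$ supported on $-\Supp(\zeta)$ is genuinely a power of $\tzeta$ rather than an arbitrary $q$-cycle on that set. I would first try to extract this from the requirement that $\sigma$ commutes with both $\tau$ and $\ttau$ while being nontrivial on these supports (otherwise reduce to (a)). Failing that, I would fall back on direct use of (\ref{mag-aut-eqn}) with $a\in\Supp(\zeta)$.
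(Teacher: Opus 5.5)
Your reduction via Corollary~\ref{special_or_smallprime} to $q\in\{2,3,5,7\}$ is fine. In configuration (b) your first idea also works: once $\sigma$ has odd prime order, it preserves $\Supp(\zeta_1)=\Supp(\tzeta_2)$ and $\Supp(\zeta_2)=\Supp(\tzeta_1)$. On each of these sets it acts both as a power of the $\tau$-cycle and as a power of the $\ttau$-cycle. So either $\zeta_2$ is a power of $\tzeta_1$, or $\sigma$ is trivial on $\Supp(\tau)$ and lives on at most $8$ points.

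The genuine gap is case (a). Since $\Supp(\tzeta_i)\neq\Supp(\zeta_i)$, the negation of (a) pairs off the cycles of $\tau$, which forces $m=2$. Hence (a) contains every three-cycle $\tau$ and every $\tau=\zeta_1\zeta_2$ with $\Supp(\tzeta_1)\neq\Supp(\zeta_2)$. These include the configurations where the counterexample of the required shape is $\sigma$, and it is of type (iv), not (i)--(iii). Two examples:
\begin{itemize}
\item $\tau=\zeta_1\zeta_2$ with $\Supp(\tzeta_1)\cap\Supp(\tau)=\emptyset$;
\item $\tau=\zeta_1\tzeta_1^d\zeta_3$.
\end{itemize}
In both, $\sigma=\zeta_1\tzeta_1^h$ commutes with $\tau$ and $\ttau$, and nothing available at this level excludes it. So your claim that the exchanged counterexample is special, of order $2$ or $3$, or a single $q$-cycle cannot be proved. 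Neither $\Fix(\sigma)$ being a submagma nor a hoped-for contradiction with $\Soc(A,\sigma)=\{0\}$ gives any leverage; the lemma is a purely combinatorial reduction. Your suggestion in (b) that a third $5$-cycle can be excluded by counting is also false: $\zeta_1\tzeta_1^d\zeta_3$ fits in $16$ points and is not itself of form (iv). Finally, you only normalise $\tau$, so the cycle structure of $\sigma$ is uncontrolled throughout.

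What is missing is a symmetric cycle-structure argument.
\begin{enumerate}
\item Use Proposition~\ref{prime-special} together with Proposition~\ref{recip}, so that both $\tau$ and $\sigma$ have prime order or are special. You may then assume both have prime order in $\{5,7\}$, since Lemma~\ref{long-cycle} disposes of an order $\geq 11$. You may also assume each has at least two cycles, since otherwise you are in (iii).
\item Because $\sigma$ has odd prime order and commutes with both $\tau$ and $\ttau$, Proposition~\ref{perm-comm} shows that every nontrivial cycle of $\sigma$ meeting $\Supp(\tau)$ (respectively $\Supp(\ttau)$) is a power of a cycle of $\tau$ (respectively $\ttau$). In particular, a cycle of $\sigma$ cannot meet some $\tzeta_i$ without being a power of it.
\item Combine this with $\Supp(\tzeta)\neq\Supp(\zeta)$ and the fact that at most three disjoint $5$-cycles, or two disjoint $7$-cycles, fit in $p-1\leq 18$ points. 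This forces either $\tau$ or $\sigma$ to equal $\zeta\tzeta^h$, or it excludes the configuration outright. For example, $\tau=\zeta_1\zeta_2$ with $\sigma=\zeta_1\tzeta_2^d$ would require four disjoint $5$-cycles.
\item In the three-cycle case, show that the $\zeta_3$-exponent of $\sigma$ is $0$, because $\Supp(\zeta_3)$ and $\Supp(\tzeta_3)$ overlap without coinciding. Then $\sigma=\zeta_1\tzeta_1^{h}$ is the type-(iv) counterexample.
\end{enumerate}
The magma equation is never needed.
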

\begin{proof}
We treat the cases $p=17$ and $p=19$ simultaneously. Again let $\sigma$, $\tau$ satisfy (\ref{st-conds}). By Proposition \ref{prime-special} we may assume that each of $\sigma$ and $\tau$ either has prime order or is a special permutation, and we are free to swap $\sigma$ and $\tau$ by Proposition \ref{recip}. If either $\sigma$ or $\tau$ is a special permutation, or has order $2$ or $3$, we have a counterexample satisfying (i) or (ii). From now on we assume that $\tau$, $\sigma$ have prime orders $q$, $r \geq 5$ respectively.

If $q \geq 11$ then, by Lemma \ref{long-cycle}, $\sigma$ is supported on a set of cardinality at most $6$, so $r \leq 5$. If $r \neq 5$ then $\sigma$ satisfies (ii). If $r=5$ then $\sigma$ is a single cycle of length $5$, so satisfies (iii). Similarly, if $r \geq 11$ then $\tau$ satisfies (ii) or (iii). 

It remains to consider the cases $q$, $r \in \{5,7\}$. Moreover, we assume each of $\tau$, $\sigma$ contains at least two cycles, since otherwise $\tau$ or $\sigma$ satisfies (iii). We treat the cases $q=5$ and $q=7$ separately

 Suppose that $q=5$. Then  $\tau=\zeta_1 \zeta_2 \zeta_3$ or $\tau=\zeta_1 \zeta_2$ where the $\zeta_i$ are pairwise disjoint cycles of length $5$. 

First let $\tau = \zeta_1 \zeta_2 \zeta_3$. Since $\sigma$ commutes with $\tau$ and has order $5$ or $7$, it follows from Proposition \ref{perm-comm} that $\sigma= \zeta_1^a \zeta_2^b \zeta_3^c$ where $0 \leq a, b, c \leq q-1$. Relabelling the $\zeta_i$ and replacing $\sigma$ by a power, we assume without loss of generality that $a=1$. Then $\sigma= \zeta_1 \zeta_2^b \zeta_3^c$. As $\sigma$ also commutes with $\ttau = \tzeta_1 \tzeta_2 \tzeta_3$, we have $\zeta_1^d=\tzeta_i$ for some some $d \in \{1,2,3,4\}$ and $i \in \{1,2,3\}$. Since $\zeta_1$ has odd length, we cannot have $i=1$, so without loss of generality, $i=2$. Thus we have
$$ \tau = \zeta_1 \tzeta_1^d \zeta_3, \qquad \sigma = \zeta_1 \tzeta_1^{db} \zeta_3^c. $$
Now $\zeta_3$ is disjoint from $\zeta_1$ and $\zeta_2=\tzeta_1^d$, and hence $\tzeta_3$ is also disjoint from $\zeta_1$ and $\zeta_2$. As $p \leq 19$, it follows that $\zeta_3$ and $\tzeta_3$ cannot be disjoint. 
Since $\sigma$ commutes with $\ttau$, this means that $c=0$. Thus $\sigma$ is as in (iv). 

Now let $\tau=\zeta_1 \zeta_2$. By Proposition \ref{perm-comm}, $\sigma = \zeta_1^a \zeta_2^b \pi$ where $\pi$ has support of size at most $8$. If $a=b=0$ then $\sigma=\pi$, and, since $\sigma$ has order $5$ or $7$, it must consist of a single cycle, contrary to hypothesis. So, without loss of generality, we may assume that $a=1$. Then $r \neq 7$, so $r=5$ and either $\pi=\id$ or $\pi$ is a single cycle of length $5$ (disjoint from $\zeta_1$ and $\zeta_2$). However, if $\pi$ is a cycle of length $5$ and $b \neq 0$ then $\sigma$ consists of three cycles of length $5$, and, interchanging $\sigma$ and $\tau$, we are reduced to the previous case. We may therefore assume that $\sigma$ contains exactly two cycles, and either $\sigma = \zeta_1 \zeta_2^b$ with $b \neq 0$ or $\sigma = \zeta_1 \zeta'$ where $\zeta'$ is a further $5$-cycle disjoint from $\zeta_1$ and $\zeta_2$. Moreover $\sigma$ commutes with $\ttau = \tzeta_1 \tzeta_2$.

If $\sigma = \zeta_1 \zeta_2^b$ then $\zeta_2=\tzeta_1^d$ for some $d$, since the cycles $\zeta_1$, $\zeta_2$, $\tzeta_1$, $\tzeta_2$ cannot be pairwise disjoint. So $\tau$ is as in (iv). 

If $\sigma = \zeta_1 \zeta'$ then, for some $d$, we have $\zeta_1^d=\tzeta_2$ or $\zeta'=\tzeta_1^d$ or $\zeta'=\tzeta_2^d$, so that, respectively, $\tau=\zeta_1 \tzeta_1^d$ or $\sigma=\zeta_1 \tzeta_1^d$ or 
$\sigma=\zeta_1 \tzeta_2^d$. In the first two of these cases, either $\tau$ or $\sigma$ is as in (iv). We show that the remaining case $\tau=\zeta_1 \zeta_2$, $\sigma=\zeta_1 \tzeta_2^d$ cannot occur. Indeed, for $\tau$ and $\sigma$ to commute, $\zeta_1$, $\zeta_2$, $\tzeta_2$ must be pairwise disjoint, and if $\sigma$ and $\ttau$ commute then also $\tsigma$ and $\tau$ commute, so $\tzeta_1$, $\zeta_2$, $\zeta_1$ are pairwise disjoint. This would give $4$ pairwise disjoint $5$-cycles, which is impossible.

Finally, suppose that $q=7$. We may assume that $\tau=\zeta_1 \zeta_2$, the product of two cycles of length $7$. Then $\sigma = \zeta_1^a \zeta^b \pi$ where the support of $\pi$ has cardinality at most $4$. If $a=b=0$ then $r \leq 3$. So without loss of generality, $\sigma=\zeta_1 \zeta_2^b$. If $b=0$ then $\sigma$ satisfies (iii), so 
we can assume $1 \leq b \leq 6$. Since $\sigma$ commutes with $\ttau = \tzeta_1 \tzeta_2$ and there cannot be more than $2$ mutually disjoint cycles of length $7$, we have that $\tzeta_1$ is a power of $\zeta_2$ and $\tau$ satisfies (iv).
\end{proof}

\section{Computational Approaches}

In this section we describe our computational work, outlining two algorithms for testing Conjecture \ref{main-conj} and summarising the results obtained. We also describe some improvements to the second algorithm in the case of groups of prime order. These algorithms were implemented in Python and the approximate timings recorded were obtained on a laptop running at 2.80 GHz. We were able to verify Conjecture \ref{main-conj} for all abelian groups of order $\leq 14$ and also for the cyclic groups of order $17$ and $19$.

\subsection{Naive verification}

Let $A$ be a finite abelian group of order $N$. A very simple-minded way to test Conjecture \ref{main-conj} for $A$ is to generate a list of the $(N-1)!$ elements of $\Symo(A)$, and test all $(N-1)!^2$ pairs $(\tau,\sigma)$. If $\sigma \in \Autmag(A,\tau)$ and $\Soc(A,\tau)=\Soc(A,\sigma)=\{0\}$ then we have found a counterexample. If moreover $\sigma=\tau$ then we have found a quasi-linear cycle set with trivial socle, giving a negative answer to Question \ref{Rump-qn}.

There are several easy savings that can be made. Firstly, by Corollary \ref{conj-orbits} it is sufficient to consider one $\tau$ from each orbit under conjugation by $\Autgp(A)$. Secondly, by Proposition \ref{recip}, we may ignore the pairs $(\tau,\sigma)$ where $\sigma$ strictly precedes $\tau$ in our list. (We write this $\sigma \prec \tau$.) Thirdly, we omit those $\tau$ with  $\Soc(A,\tau) \neq \{0\}$, since these cannot give counterexamples to Conjecture \ref{main-conj}. While we could also omit those $\sigma$ with $\Soc(A,\sigma) \neq \{0\}$, this would result in our code producing no output unless a counterexample were found.  We prefer instead to print those pairs $(\tau,\sigma)$ where $\sigma \in \Autmag(A,\tau)$ and $\Soc(A,\tau)=\{0\}$. (We expect these to have $\Soc(A,\sigma) \neq \{0\}$.) This provides both a check that the code is working correctly and an indication during running of how far the calculation has progressed. 

The above considerations gives us Algorithm \ref{Alg1}.

\begin{algorithm}  \label{Alg1}
\caption{Naive test of Conjecture \ref{main-conj}}
\begin{algorithmic}
\State Given a finite abelian group $A$ of order $N$:
\State Create list of elements of $\Symo(A)$.
\State Create group multiplication table and inverse table for $A$.
\State Create table of group automorphisms of $A$ and their inverses.
\For{$\tau \in \Symo(A)$}
    \If{$\Soc(A,\tau)=\{0\}$ and no $\alpha \in \Autgp(A)$ has $\alpha \tau \alpha^{-1} \prec \tau$}
        \For{$\sigma \in \Symo(A)$ with $\tau \preceq \sigma$}
            \If{$\sigma \in \Autmag(A,\tau)$}
                  \State Print $(\tau,\sigma)$ and check if $\Soc(A,\sigma)=\{0\}$.
            \EndIf
          \EndFor
    \EndIf
\EndFor
\end{algorithmic}
\end{algorithm}

We give a little more detail on how this was implemented. 
We fix an enumeration $a_0=0, \ldots, a_{N-1}$ of the elements of $A$ and provide a Python function to implement the group operation. (This is just addition mod $N$ if $A$ is cyclic). We then generate a list of the elements $\tau$ of $\Symo(A)$ in lexicographical order by a backtracking algorithm, where $\tau$ is represented as a list of indices $[v_0, \ldots, v_{N-1}]$ with $a_{v_i}=\tau(a_i)$, so in particular $v_0=0$. This allows the conditions 
$\alpha \tau \alpha^{-1} \prec \tau$ and $\tau \preceq \sigma$ to be tested without searching through the list. Permutations are converted to cycle notation when they are printed.

Table \ref{Alg1-table} shows the results of Algorithm \ref{Alg1} for all abelian groups $A$ with $|A| \leq 10$. We omit the cases with $|A| \leq 3$ and $A=C_2 \times C_2$ since in these cases $\Symo(A)=\Autgp(A)$ and Conjecture \ref{main-conj} is vacuously satisfied. No counterexamples to Conjecture \ref{main-conj} were found.

\begin{table}[ht]
	\centerline{
		\begin{tabular}{|c|c|c|c|} \hline
		Group & $\#\ \tau$ tested & $\# $ magma automorphisms & Time   \\ \hline
		$C_4$ & $2$ & $0$ & $0.1$ seconds \\
		$C_5$ & $6$ & $2$ & $0.15$ seconds \\
		$C_6$ & $56$ & $9$ & $0.3$ seconds \\
		$C_7$ & $130$ & $22$ & $0.7$ seconds \\
		$C_8$ & $1\,276$ & $55$ & $6$ seconds \\
    	$C_4 \times C_2$ & $596$ & $24$ & $3$ seconds \\
		$C_2 \times C_2 \times C_2$ & $28$ & $23$ & $1$ second \\
		$C_9$ & $6\,784$ & $215$ & $3$ minutes \\
		$C_3 \times C_3$ & $866$ & $129$ & $28$ seconds \\
		$C_{10}$ & $90\,720$ & $358$ & $6$ hours \\
			\hline
		\end{tabular}
	}  
	\vskip4mm
\caption{Results from Algorithm \ref{Alg1}} \label{Alg1-table}
\end{table}	

The second column  of Table \ref{Alg1-table} shows the number of elements $\tau \in \Symo(A)$ tested by the algorithm. For example, when $A=C_5$ there are $20$ elements of $\Symo(C_5)$ which are not group automorphisms. These fall into $6$ orbits under conjugation by $\Autgp(C_5)$, represented by $(34)$, $(23)$, $(234)$, $(243)$, $(12)(34)$, $(1234)$. These $6$ permutations are tested to see if any $\sigma \succeq \tau$ is a magma automorphism of $(A,\tau)$. This occurs only for $\tau=(23)$ and $\tau=(12)(34)$, both with $\sigma=(14)(23)$. (Note that, as expected, $\sigma \in \Autgp(A)$.) Thus two magma automorphisms are found, as shown in the third column. In general, there may be many magma automorphisms $\sigma$ for the same $\tau$.

\subsection{Using Magma generators}

Algorithm \ref{Alg1} is slow because it finds magma automorphisms for $(A,\tau)$ by searching through all $\sigma \succeq \tau$. It also requires a stored list of all $\sigma \in \Symo(A)$. A more efficient method of searching for magma automorphisms, which does not require such a list,  is to use a (small) set of generators of the magma $(A,\tau)$. To search for $\sigma\in \Autmag(A,\tau)$ in this way, we need to know not only the magma generators, but also how each element can be obtained from these generators. We record this information in what we call a magma generation table. Table \ref{mag_gen_table} gives an example of such a table for $A=C_{10}$ with $\tau=(68)$. The multiplication table for this magma is shown in Table \ref{mag_gen_ex}. 

\begin{table}[ht]
	\centerline{
		\begin{tabular}{|c|c|c|} \hline
			product & op1 & op2  \\ \hline
			$0$ &	$0$ 	& 	$0$		 \\ 
			$2$ &	$-1$	&	$-1$	 \\
			$4$ &	$2$		&	$2$		 \\
			$6$ &	$2$		&	$4$		 \\
			$8$ &	$4$		&	$2$		 \\
			$1$ &   $-1$	&	$-1$		 \\
			$3$ &	$2$	&	$1$	 \\
			$9$ &	$4$		&	$1$		 \\
			$5$ &	$2$		&	$3$		 \\
			$7$ &	$4$		&	$9$		 \\
			\hline
		\end{tabular}
	}  
	\vskip4mm
	\caption{Magma generation table for $A=C_{10}$, $\tau=(68)$} \label{mag_gen_table}
\end{table}	

\begin{table}[ht]
	\centerline{
		\begin{tabular}{|c|cccccccccc|} \hline
			& $0$ & $1$ & $2$ & $3$ & $4$ & $5$ & $6$ & $7$ & $8$ & $9$  \\ \hline
			$0$ &	$0$ & $1$ & $2$ & $3$ & $4$ & $5$ & $8$ & $7$ & $6$ & $9$ \\
			$1$ &  $0$ & $1$ & $2$ & $3$ & $4$ & $5$ & $6$ & $9$ & $8$ & $7$    \\
			$2$ &	$0$ & $3$ & $4$ & $5$ & $6$ & $7$ & $8$ & $9$ & $2$ & $1$  \\
			$3$ &	$0$ & $9$ & $2$ & $3$ & $4$ & $5$ & $6$ & $7$ & $8$ & $1$  \\
			$4$ &	$0$ & $9$ & $8$ & $1$ & $2$ & $3$ & $4$ & $5$ & $6$ & $7$  \\
			$5$ &	$0$ & $3$ & $2$ & $1$ & $4$ & $5$ & $6$ & $7$ & $8$ & $9$  \\
			$6$ &	$0$ & $1$ & $4$ & $3$ & $2$ & $5$ & $6$ & $7$ & $8$ & $9$  \\
			$7$ &	$0$ & $1$ & $2$ & $5$ & $4$ & $3$ & $6$ & $7$ & $8$ & $9$  \\
			$8$ &	$0$ & $1$ & $2$ & $3$ & $6$ & $5$ & $4$ & $7$ & $8$ & $9$   \\
			$9$ &	$0$ & $1$ & $2$ & $3$ & $4$ & $7$ & $6$ & $5$ & $8$ & $9$  \\
			
			\hline
		\end{tabular}
	}  
	\vskip4mm
	\caption{Magma operation for $A=C_{10}$, $\tau =(68)$} \label{mag_gen_ex}
\end{table}	

We explain the format of a magma generation table. The first column lists the elements of $A$ (in some order) starting with $0$. The value $-1$ in the last two columns indicates the introduction of a new generator, so the 2nd and 6th rows of Table \ref{mag_gen_table} show that our generating set is $\{2,1\}$. Every other row (except the first) shows how the element in the first column is obtained from previous elements. Thus, in our example, the submagma generated by $2$ contains $4=2 \cdot 2$, $6=2 \cdot 4$ and $8=4 \cdot 2$. In fact, this submagma is precisely $\{0,2,4,6,8\}$ so we require a further generator. Choosing $1$, we then obtain the remaining elements of $C_{10}$. 

To construct a magma generation table for a given $\tau$, we first check whether the magma $(A,\tau)$ is cyclic. By Lemma \ref{submags}(i) it suffices to take one element $a$ from each orbit of $A$ under the subgroup $\langle \tau, \ttau \rangle$ of $\Symo(A)$ and test whether $a$ generates the full magma $(A,\tau)$. If so, then we can build a magma generation table using the single generator $a$. Otherwise, we take the generator $a$ which gave the largest cyclic submagma, and add further generators until we do obtain a generating set. Thus, in Table \ref{mag_gen_table}, we started with the generator $2$ since this generates the cyclic submagma $\{0,2,4,6,8\}$ of size $5$, while $1$ generates the smaller cyclic submagma $\{0,1\}$, and no element generates a cyclic submagma of size larger than $5$. This process for choosing the generators is not guaranteed to find a generating set of minimal size (except when a single generator suffices), but we suspect that it will do so in nearly all cases.

Having found the magma generation table, we search for magma automorphisms $\sigma$ by considering all ways to assign values of $\sigma$ on the generators. We illustrate this with Table \ref{mag_gen_table}. 

We first try
 $\sigma(2)=1$. The 3rd row of Table \ref{mag_gen_table}, together with Table \ref{mag_gen_ex}, then gives 
 $$ \sigma(4) = \sigma(2) \cdot \sigma(2) = 1 \cdot 1 =  1 = \sigma(2). $$
 This shows no injective function $\sigma$ with $\sigma(2)=1$ is compatible with Table \ref{mag_gen_table}. 
 
 We next try $\sigma(2)=2$. We then get
 $$ \sigma(4) = \sigma(2) \cdot \sigma(2) = 2 \cdot 2 = 4 $$
 and similarly $\sigma(6)=6$, $\sigma(8)=8$. Thus $\sigma$ fixes pointwise the submagma $\{0,2,4,6,8\}$ generated by $2$. We then consider the possibilities for $\sigma(1)$; these are the elements $1$, $3$, $5$, $7$, $9$ not already assigned as values of $\sigma$. If $\sigma(1)=1$, we have
 $$ \sigma(3) = \sigma(2) \cdot \sigma(1) = 2 \cdot 1 = 3, $$
 and similarly $\sigma(9)=9$, $\sigma(5)=5$, $\sigma(7) =7$. Thus $\sigma$ is the identity automorphism (which we do not record). If $\sigma(1)=3$, we obtain
 $$ \sigma(3) = \sigma(2) \cdot \sigma(1) = 2 \cdot 3 = 5, $$
 $$ \sigma(9) = \sigma(4) \cdot \sigma(1) = 4 \cdot 3 =1, $$
 $$ \sigma(5) = \sigma(2) \cdot \sigma(3) = 2 \cdot 5 =7, $$
 $$ \sigma(7) = \sigma(4) \cdot \sigma(9) = 4 \cdot 1 =9. $$
 Thus the permutation $\sigma=(13579)$ is compatible with Table \ref{mag_gen_table}. We check that $\sigma$ is indeed a magma autumorphism. Assigning $\sigma(1)=5$, $7$, $9$ gives powers of this permutation.
 
 We next try $\sigma(2)=3$. This immediately gives $\sigma(4)=3 \cdot 3 = 3=\sigma(2)$, so we do not obtain a bijection.
 
 Taking $\sigma(2)=4$, we get $\sigma(4)=2$, $\sigma(6)=8$, $\sigma(8)=6$. If $\sigma(1)=1$, we obtain $\sigma=(24)(68)(39)(57)$ which is not a magma automorphism since for example $\sigma(3) \cdot \sigma(1) = 9 \cdot 1 =1$ but $\sigma(3 \cdot 1) = \sigma(9)=3$. If $\sigma(1)=3$, we obtain $\sigma=(24)(68)(13)(59)$, which again is not a magma automorphism. Carrying on in this way, we find that $\Autmag(C_{10},\tau)$ is cyclic of order $5$, generated $(13579)$. 
 
The method just outlined for finding magma automorphisms can also be used to find the automorphisms of the group $A$ itself via a magma generation table for the group.

These considerations give us Algorithm \ref{Alg2}.

\begin{algorithm}  \label{Alg2}
	\caption{Test of Conjecture \ref{main-conj} using magma generators}
	\begin{algorithmic}
		\State Given a finite abelian group A:
		\State Create group multiplication table and inverse table for $A$.
		\State Create table of group automorphisms of $A$ and their inverses.
		\For{$\tau \in \Symo(A)$}
		\If{$\Soc(A,\tau)=\{0\}$ and no $\alpha \in \Autgp(A)$ has $\alpha \tau \alpha^{-1} \prec \tau$}
		\State Create magma generation table for $(A,\tau)$
		\State Create list of elements of $\Autmag(A,\tau)$
		\For{$\sigma \in \Autmag(A,\tau) \bs \{\id\}$}
		\State Print $(\tau,\sigma)$ and check if $\Soc(A,\sigma)=\{0\}$.
		\EndFor
		\EndIf
		\EndFor
	\end{algorithmic}
\end{algorithm}

Algorithm \ref{Alg2} will test some pairs $(\tau, \sigma)$ with $\sigma \prec \tau$, so does not give the same numerical results as Algorithm \ref{Alg1}. Most pairs tested will however satisfy
$\tau \preceq \sigma$ since we only test the first $\tau$ (with respect to the ordering $\prec$) in each orbit under conjugation by $\Autgp(A)$. For comparability with Algorithm \ref{Alg1}, we include a count those pairs for which $\tau \prec \sigma$ does hold. For $|A| \leq 8$, Algorithm \ref{Alg2} recovers the results of Algorithm \ref{Alg1} at most $2$ seconds. We show the results and timings for $9\leq |A| \leq 14$ in Table \ref{Alg2-table}.

\begin{table}[ht]
	\centerline{
		\begin{tabular}{|c|c|c|c|c|c|} \hline
			Group & $\#\ \tau$ tested & $\#$ with & $\# $ magma & $\# $ magma  & Time   \\ 
			   &  &  noncyclic & automorphisms & automorphisms  & \\
			    & &  magma & & with $\tau \preceq \sigma$ & \\ \hline		
			$C_9$ & $6\,784$ & $28$ & $260$ & $215$ & $7$ seconds \\
			$C_3 \times C_3$ & $866$ & $21$ & $190$ & $5$ & $5$ seconds \\
			$C_{10}$ & $90\,720$ & $730$ & $408$ & $358$ & $18$ seconds \\
			$C_{11}$ & $363\,278$ & $0$ & $846$ & $820$ & $1$ minute \\
			$C_{12}$ & $9\,981\,000$ & $25\,069$ & $9\,608$ & $7\,669$ & $21$ minutes \\
			$C_6 \times C_2$ & $3\,326\,020$ & $23\,273$ & $8\,464$ & $4\,526$ & $11$ minutes \\
			$C_{13}$ & $39\,921\,032$ & $0$ & $9\,064$ & $8\,773$ & $1.7$ hours \\
			$C_{14}$ & $1\,037\,837\,374$ & $605\,163$ & $22\,754$ & $21\,706$ & $36$ hours \\
			\hline
		\end{tabular}
	}  
\vskip4mm
\caption{Results from Algorithm \ref{Alg2}} \label{Alg2-table}
\end{table}

\subsection{Groups of prime order}

In this final subsection, we describe some modifications to Algorithm \ref{Alg2} in the case that $A=C_p$ with $p$ prime. By Proposition \ref{prime-special} and Corollary \ref{special-p}, it suffices to consider only those $\tau$ which have prime order $q \leq (p-1)/2$ or are special permutations of prime-power order $q^e$ dividing $p-1$. By verifying that no such $\tau$ is a counterexample, we are then able to verify Conjecture \ref{main-conj} for $A=C_{11}$ and $A=C_{13}$ more quickly than with Algorithm \ref{Alg2}. By incorporating Lemma \ref{ad hoc}, we are also able to handle $C_{17}$ and $C_{19}$.

Recall from Corollaries \ref{conj-orbits} and \ref{cyclic-powers} that if $\tau$ is a counterexample for $A$ then so are $\tau^k$ and $\alpha \tau \alpha^{-1}$ for any $k$ coprime to the order of $\tau$ and for any $\alpha \in \Autgp(A)$. In Algorithm \ref{Alg2} we used a backtracking process to generate all $\tau \in \Symo(A)$, but then ignored those with  $\alpha \tau \alpha^{-1} \prec \tau$ for some group automorphism $\alpha$. We now adapt the backtracking process to generate only those permutations with cycle decompositions of the required form which satisfy certain additional conditions. These additional conditions, which are different in the two cases of permutations of prime order and special permutations, guarantee that, for any $\tau$ of the appropriate type, at least one of the permutations generated can be obtained from $\tau$ by a combination of taking powers and conjugating by group automorphisms. Thus the set of permutations $\tau$ generated will be sufficient to show that there are no counterexamples of the type considered.

\subsubsection{Special Permutations}

We deal first with special permutations, since there are few of these and they have a known cycle shape. Let $q^e$ be a prime power dividing $p-1$, with $e \geq 2$. We fix a group automorphism $\alpha$ of $A$ of order $q^{e-1}$. Let $\tau$ be a special permutation of order $q^e$. Since $\Autgp(A)$ is cyclic of order $p-1$, it follows from Proposition \ref{special-p} that $\tau^q=\alpha^k$ for some $k$ with $\gcd(k,q)=1$. Replacing $\tau$ by $\tau^m$ where $km \equiv 1 \pmod{q^e}$, we may assume that $\tau^q=\alpha$.  

The permutation $\tau$ can be written as a product of $(p-1)/q^e$ cycles of length $q^e$:
\begin{equation} \label{spec-cycles}
	\tau = (u_1, \ldots, u_{q^e}) (u_{q^e+1},  \ldots, u_{2q^e}) \cdots
	(u_{p-1-q^e+1}, \ldots, u_{p-1}),
\end{equation} 
where $u_1, \ldots, u_{p-1}$ are the elements of $\{1, \ldots, p-1\}$ in some order.
We write this decomposition in the standard way, so that the smallest element of each cycle occurs at the start of the cycle and the cycles are ordered with their first elements occur in increasing order. Thus we have
\begin{equation} \label{spec-cycles-in-ord}
	u_{i q^e+j} > u_{i q^e +1} \mbox{ for } 0 \leq i < (p-1)/q^e \mbox{ and } 2 \leq j \leq q^e.
\end{equation}
\begin{equation} \label{spec-first-in-ord}
	 u_1=1 < u_{q^e+1} < \cdots < u_{p-1-q^e+1}, 
\end{equation}
The condition $\tau^q=\alpha$ means that each cycle is determined by its first $q$ elements: 
\begin{equation}  \label{q-pow-is-tau}
  u_{i q^e + j} = \alpha(u_{iq^e +j-q})  \mbox{ for } 0 \leq i < (p-1)/q^e \mbox{ and } q+1 \leq j \leq q^e.
\end{equation}
Finally, we are free to replace $\tau$ by $\tau^{1+q^{e-1}h}$ since this preserves the condition $\tau^q=\alpha$. We choose $h$ to minimise $u_2$, so 
\begin{equation} \label{2nd-min}
	u_{2+q^{e-1} h} > u_2 \mbox{ for } 1 \leq h \leq q-1.
\end{equation}
The conditions (\ref{spec-cycles-in-ord}) and (\ref{spec-first-in-ord}) ensure we generate each permutation $\tau$ of the form (\ref{spec-cycles}) only once, whereas 
(\ref{q-pow-is-tau}) and (\ref{2nd-min}) are the additional conditions reducing the number of permutations generated.

We modify the backtracking algorithm used previously to generate all sequences $[0, u_1,\ldots, u_{p-1}]$ satisfying (\ref{spec-cycles-in-ord})--(\ref{2nd-min}). We convert each of these sequences, representing $\tau$ as a cycle decomposition (\ref{spec-cycles}), to the sequence
$[0, v_1,\ldots, v_{p-1}]$ where $\tau(a)=v_a$, as used previously. We then find $\Autmag(A,\tau)$ as before. This gives us the variant of Algorithm \ref{Alg2} which we list as Algorithm 6.3.

\begin{algorithm} \label{Alg3}
	\caption{Test of Conjecture \ref{main-conj} for special permutations}
	\begin{algorithmic}  
		\State Given $A=C_p$ for $p$ prime:
		\State Create group multiplication table and inverse table for $A$.
		\For {each prime power $q^e$ (with $e \geq 2$) dividing $p-1$}
		\For{$\tau \in \Symo(A)$ with cycle decomposition as in (\ref{spec-cycles})--(\ref{2nd-min})}
		\If{$\Soc(A,\tau)=\{0\}$}
		\State Create list of elements of $\Autmag(A,\tau)$
		\For{$\sigma \in \Autmag(A,\tau) \bs \{\id\}$}
		\State Print $(\tau,\sigma)$ and check if $\Soc(A,\sigma)=\{0\}$.
		\EndFor
		\EndIf
		\EndFor
		\EndFor
	\end{algorithmic}
\end{algorithm}

Table \ref{spec-table} shows the timings to test the special permutations for primes $p$ with $13 \leq p \leq 29$ for which $p-1$ is not squarefree. Although the full verification of Conjecture \ref{main-conj} for primes $p>19$ is not viable with our methods, we include the special permutations for $p=29$ (the next case where $p-1$ is not squarefree) since these can be handled in a reasonable time. The condition $\Soc(A,\tau)=\{0\}$ in Algorithm \ref{Alg3} ensures that special permutations which are already group automorphisms are excluded. No counterexamples were found among the remaining special permutations.

\begin{table}[ht] \label{spec-table}
	\centerline{
		\begin{tabular}{|c|c|c|c|c|} \hline
			$p$ & Order(s) of special  & $\#\ \tau$ tested & \# magma & Time\\ 
				&	permutations &    & automorphisms & \\ \hline
			$13$ & $4$  & $60$ & $64$ & $0.3$ seconds \\      
			$17$ & $4$, $8$, $16$ & $868$ & $652$ & $0.5$ seconds \\
			$19$ & $9$ & $1\,080$ & $1\,158$ & $0.7$ seconds \\
		    $29$ & $4$ & $8\,648\,640$ & $2\,483\,754$     & $27$ minutes \\
			\hline
		\end{tabular}
	}  
\vskip4mm
\caption{Results for Special Permutations} \label{spec-table}
\end{table}

\subsubsection{Permutations of prime order}
We next describe a modification to Algorithm \ref{Alg2} to handle permutations of arbitrary prime order $q<p$. Such a permutation may contain up to $\lfloor (p-1)/q \rfloor$ cycles of length $q$. 

We fix a primitive root $g$ modulo $p$ and generate sequences of length $p$ of the shape 
\begin{equation}  \label{pr-seq}
 	[0,u_1, u_2, \ldots, u_{kq}, 0 \ldots, 0], 
 \end{equation}
 where $1 \leq k \leq \lfloor (p-1)/q \rfloor$ and the final block of $0$'s has length $p-1-kq \geq 0$. Here $u_1, \ldots, u_{kq}$ are distinct elements of $\{1, 2, \ldots, p-1\}$. However, we now interpret this sequence as the permutation 
 \begin{equation} \label{cyc-tau-perm}
 	  \tau = (g^{u_1}, \ldots, g^{u_q}) \cdots (g^{u_{kq-q+1}} \ldots , g^{u_{kq}}) \in \Symo(A) 
 \end{equation}
 consisting of $k$ cycles of length $q$. Thus
 \begin{equation} \label{lambda-perm} 
  \lambda = (u_1, \ldots, u_q) \cdots (u_{kq-q+1}, \ldots , u_{kq})
 \end{equation}
 is the permutation of exponents on $g$ corresponding to $\tau$. We again assume this is written in the standard way:
 \begin{equation} \label{q-first-in-ord}
 	u_1=1 < u_{q+1} < \cdots < u_{kq-q+1}, 
 \end{equation}
 \begin{equation} \label{q-cycles-in-ord}
 	u_{i q+j} > u_{i q +1} \mbox{ for } 0 \leq i < k \mbox{ and } 2 \leq j \leq q.
 \end{equation}
We choose to work with $u_j$ in the range $1 \leq u_j \leq p-1$, rather than $0 \leq u_j \leq p-2$, because of the special role of $0$ as the first element of the sequence (\ref{pr-seq}). It is therefore convenient to introduce the notation $a \oplus b$, respectively $a \ominus b$, for the least strictly positive residue mod $p-1$ of $a+b$, respectively $a-b$. 
 
 Any automorphism of $A=C_p$ can be written as $a \mapsto ag^c$ in $\Z/p\Z$ for some fixed $c$. Conjugating $\tau$ by this automorphisms transforms the sequence (\ref{pr-seq}) to $[0,u_1\oplus c, u_2\oplus c, \ldots, u_{kq} \oplus c, 0 \ldots, 0]$. The advantage of working with the exponents $u_i$ is that the set $\{  \lambda(u_i) \ominus u_i : 1 \leq i \leq kq \}$ of jumps of $\lambda$ is preserved by conjugation by group automorphisms, and a suitable conjugation moves the minimum jump so that it occurs when $i=1$. 
 We may therefore impose the condition
 \begin{equation} \label{q-cycles-min}
 	u_{hq+j+1} \ominus u_{hq+j} \geq u_2-1 \mbox{ for } 0 \leq h < k \mbox{ and } 1 \leq j < q. 
 \end{equation}
Moreover, if for some $i \leq q$ we have $u_i<u_2$ (which can only occur if $u_{i-1}>u_i$) then we may replace $\lambda$ by a suitable power to move $u_i$ into the second position, thereby reducing the minimum jump of $\lambda$. Thus we may assume that 
\begin{equation} \label{u2-sec-min}
	u_2 = \min_{2 \leq i \leq q} u_i.
\end{equation} 
Our additional conditions to reduce the number of permutations $\tau$ of order $q$ tested are then (\ref{q-cycles-min}) and (\ref{u2-sec-min}).

We then have Algorithm 6.4.

\begin{algorithm}  \label{Alg4}
	\caption{Test of Conjecture \ref{main-conj} for $A=C_p$, $\tau$ of order $q$}
	\begin{algorithmic} 
		\State Given $A=C_p$ for $p$ prime and a prime $q<p$:
		\State Find a primitive root mod $p$ and create a table of its powers.
		\State Create group multiplication table and inverse table for $A$.
		\For {each permutation $\lambda$ as in (\ref{lambda-perm})--(\ref{u2-sec-min})}
		\State Construct $\tau$ corresponding to $\lambda$ by (\ref{cyc-tau-perm})
		\If{$\Soc(A,\tau)=\{0\}$}
		\State Create list of elements of $\Autmag(A,\tau)$
		\For{$\sigma \in \Autmag(A,\tau) \bs \{\id\}$}
		\State Print $(\tau,\sigma)$ and check if $\Soc(A,\sigma)=\{0\}$.
		\EndFor
		\EndIf
		\EndFor
	\end{algorithmic}
\end{algorithm}

 In Table \ref{gen-q-table} we show the results from Algorithm 6.4 for $p=11$, $13$, $17$, $19$. 
 For $p=11$ and $p=13$ we include all primes $q<p$, although by Corollary \ref{special_or_smallprime} only the primes $q \leq 5$ are necessary. For $p=17$ and $p=19$, the primes $q=2$, $3$, $5$, $7$ all need to be considered, but we only include $q=2$, $3$ here and handle $q=5$, $7$ in \S\ref{ad-hoc-sec} below. For $A=C_{11}$, verifying Conjecture \ref{main-conj} takes about $9$ seconds using Algorithm 6.4, compared to one minute using Algorithm \ref{Alg2}. Verifying Conjecture \ref{main-conj} for $A=C_{13}$ using Algorithms 6.3 and 6.4 takes less than $2$ minutes, compared to $1.7$ hours with Algorithm \ref{Alg2}. 
 
\begin{table}[ht]
	\centerline{
		\begin{tabular}{|c|c|c|c|c|c|} \hline
			$p$ & $q$  & $\#\ \tau$ tested & $\# $ magma  & Time   \\ 
			&  & & automorphisms  & \\ \hline
			& $2$ &  $1\,541$ &  $87$ & $3$ seconds    \\
			$11$ & $3$ & $4\,729$  &  $11$ & $1$ second   \\
			& $5$ & $11\,177$  & $83$ & $4$ seconds     \\
			& $7$ &  $8\,071$ & $0$  &  $1$ second \\ \hline
			& $2$ & $18\,405$  &  $350$ &  $14$ seconds  \\
			$13$ & $3$ & $100\,014$   & $275$  &  $21$ seconds   \\
			& $5$ & $530\,295$  & $300$  & $1$ minute   \\
			& $7$ & $40\,946$  & $0$  & $4$ seconds    \\
			& $11$ & $3\,762\,296$  & $0$  &  $7$ minutes   \\ \hline			
			$17$ & $2$ &  $4\,451\,105$  & $5\,091$ & $15$ minutes \\
			& $3$ &	$117\,715\,139$ & $2\,151$  & $5.2$ hours   \\ \hline
			$19$ & $2$ &  $85\,029\,389$  & $23\,197$  & $4.5$ hours   \\
			& $3$ & $4\,673\,448\,594$ & $28\,232$  & $11$ days   \\ \hline
		\end{tabular}
	}  
\vskip4mm
\caption{Results for cycles of prime order} \label{gen-q-table}
\end{table}

\subsubsection{Further modifications for $p=17$ and $p=19$}  \label{ad-hoc-sec}
To handle the cases $p=17$, $19$ with $q=5$, $7$, we further reduce the number of permutations to be tested by adapting Algorithm 6.4 in line with Lemma \ref{ad hoc}(iii), (iv). For Lemma \ref{ad hoc}(iii), 
we only consider permutations (\ref{lambda-perm}) with $k=1$, so that $\tau=\zeta$ is a single $q$-cycle. For Lemma \ref{ad hoc}(iv), we generate cycles $\zeta$ with the additional property that $\zeta$ and $\tzeta$ are disjoint, and then test all permutations $\tau=\zeta \tzeta^h$ for $1 \leq h \leq q-1$. The results are shown in Table \ref{ad hoc table}, and these complete the verification of Conjecture \ref{main-conj} for $A=C_{17}$ and $A=C_{19}$.

\begin{table}[ht]
	\centerline{
		\begin{tabular}{|c|c|c|c|c|c|} \hline
			$p$ & $q$  & Form of $\tau$ & $\#\ \tau$ tested & $\# $ magma  & Time   \\ 
			&  & & & automorphisms  & \\ \hline
		  $17$	& $5$ &  $\zeta$ & $5\,068$  & $0$  & $1$ second    \\
			 & & $\zeta \tzeta^h$ & $8\,436$ & $2\,109$  &  $2$ seconds   \\ \hline
			$17$ & $7$  & $\zeta$ & $403\,238$  &  $0$ &  $1$ minute  \\ 	
			 &   & $\zeta \tzeta^h$ & $223\,110$  &  $37\,185$ &  $38$ seconds  \\ \hline	
		 $19$	& $5$ & $\zeta$ & $8\,673$  & $0$ & $2$ seconds    \\
			 &  & $\zeta \tzeta^h$ & $16\,524$ & $4\,131$ & $4$ seconds  \\ \hline
			$19$ & $7$ & $\zeta$ & $967\,138$  & $0$  & $3$ minutes     \\
				&   & $\zeta \tzeta^h$ & $858\,576$ & $143\,096$  & $3$ minutes   \\ \hline		
		\end{tabular}
	}  
\vskip4mm
\caption{Results for $p=17$, $19$ and $q=5$, $7$} \label{ad hoc table}
\end{table}
\pagebreak

\bibliography{MagmaAutomsBib}

\end{document}